%
\documentclass[reqno]{amsart}
\usepackage{amssymb}
\usepackage{hyperref}

\textheight 22.5truecm \textwidth 14.5truecm
\setlength{\oddsidemargin}{0.35in}\setlength{\evensidemargin}{0.35in}

\setlength{\topmargin}{-.5cm}

\newtheorem{theorem}{Theorem}[section]
\newtheorem{lemma}[theorem]{Lemma}
\newtheorem{proposition}[theorem]{Proposition}
\newtheorem{corollary}[theorem]{Corollary}
\theoremstyle{definition}
\newtheorem{definition}[theorem]{Definition}

\theoremstyle{remark}
\newtheorem{remark}[theorem]{Remark}

\numberwithin{equation}{section}

\begin{document}

\title{On Supercyclic Sets of Operators}
\author{ Mohamed Amouch and Otmane Benchiheb }

\address{\textsc{Mohamed Amouch and Otmane Benchiheb},
University Chouaib Doukkali.
Department of Mathematics, Faculty of science
Eljadida, Morocco}
\email{amouch.m@ucd.ac.ma}
\email{otmane.benchiheb@gmail.com}
\subjclass[2010]{47A16.47D60.}
\keywords{orbit under an operator, orbit under a set, supercyclic sets of operators, hypercyclic and supercyclic operator, strongly continuous semigroup of operators, $C$-regularized group of operators.}

\begin{abstract}
Let $X$ be a complex topological vector space with dim$(X)>1$ and $\mathcal{B}(X)$ the space of all continuous linear operators on $X$. In this paper, we extend the concept of supercyclicity of a single operators and strongly continuous semigroups of operators to a subset of $\mathcal{B}(X)$. We establish some results for supercyclic set of operators and we give some applications for strongly continuous semigroups of operators and $C$-regularized group of operators.
\end{abstract}


\maketitle

\section{Introduction and Preliminary}
Let $X$ be a complex topological vector space with dim$(X)>1$ and $\mathcal{B}(X)$ the space of all continuous linear operators on $X$. By an operator, we always mean a continuous linear operator.
If $T\in\mathcal{B}(X)$, then the orbit of a vector $x\in X$ under $T$ is the set
$$ Orb(T,x):=\{T^{n}x\mbox{ : }n\geq0\}. $$
The operator $T$ is said to be supercyclic \cite{HW} if there is some vector $x\in X$ such that 
$$\mathbb{C}Orb(T,x)=\{\alpha T^{n}x\mbox{ : }\alpha\in\mathbb{C}\mbox{, } n\geq0\}$$
 is dense in $X$. Such vector $x$ is called a supercyclic vector for $T$, and the set of all supercyclic vectors for $T$ is denoted by $SC(T)$.
Recall \cite{Feldman1} that $T$ is supercyclic if and only if for each pair $(U,V)$ of nonempty open subsets of $X$ there exist $\alpha\in\mathbb{C}$ and $n\geq0$ such that 
$$\alpha T^{n}(U)\cap V\neq \emptyset.$$

The supercyclicity criterion for a single operator was introduced in \cite{Salas2}. It provides several sufficient conditions that ensure supercyclicity.
We say that an operator $T\in\mathcal{B}(X)$ satisfies the supercyclicity criterion  if there exist an increasing sequence of integers $(n_k)$, a sequence $(\alpha_{n_k})$ of nonzero complex numbers, two dense sets $X_0$, $Y_0\subset X$ and a sequence of maps $S_{n_k}$ : $Y_0\longrightarrow X$ such that$:$
\begin{itemize}
\item[$(i)$] $\alpha_{n_k}T^{n_k}x\longrightarrow 0$ for any $x\in X_0$;
\item[$(ii)$] $\alpha_{n_k}^{-1}S_{n_k}y\longrightarrow 0$ for any $y\in Y_0$;
\item[$(iii)$] $T^{n_k} S_{n_k}y\longrightarrow y$ for any $y\in Y_0$.
\end{itemize}
For a general overview of supercyclicity and related proprieties in linear dynamics see \cite{Bayart Matheron,BBP,GEU,Erdmann Peris,HL,HW,LSM,MS,Salas2}.

Recall \cite[Definition 1.5.]{Erdmann Peris}, that an operator $T\in\mathcal{B}(X)$ is called quasi-conjugate or quasi-similar to an operator $S\in\mathcal{B}(Y)$ if there exists a continuous map $\phi$ : $Y\longrightarrow X$ with dense range such that $T\circ\phi=\phi\circ S.$ If $\phi$ can be chosen to be a homeomorphism, then $T$ and $S$ are called conjugate or similar.
Recall \cite[Definition 1.7.]{Erdmann Peris}, that a property $\mathcal{P}$ is said to be preserved under quasi-similarity if the following holds$:$ if an operator $T\in\mathcal{B}(X)$ has property $\mathcal{P}$, then every operator $S\in\mathcal{B}(Y)$ that is quasi-similar to $T$ has also property $\mathcal{P}$. 

Recall \cite{AKH}, that  If $\Gamma$ is a subset of $\mathcal{B}(X),$ then the orbit of a vector $x\in X$ under $\Gamma$ is the set
$$ Orb(\Gamma,x)=\{Tx \mbox{ : }T\in\Gamma\}. $$

Recall \cite{Conway}, that for a topological vector space $X$, the strong operator topology (SOT) on $\mathcal{B}(X)$ is the topology with respect to which any $T\in \mathcal{B}(X)$ has a neighborhood basis consisting of sets of the form
$\Omega=\{S\in\mathcal{B}(X) \mbox{ : }Se_i-Te_i\in U\mbox{, }i=1,2,\dots,k\}$ where $k\in\mathbb{N}$, $e_1,e_2,\dots e_k\in X$ are linearly independent
and U is a neighborhood of zero in $X$.

In this work, we introduce the concept of supercyclicity for a set of operators of $\mathcal{B}(X)$ which
generalize the notion of supercyclicity for a single operator. We deal with supercyclic set and we prove that some properties known for one supercyclic operator remain true for supercyclic set of operators.
In \cite{Bayart Matheron}, it has shown that the set of supercyclic vectors of a single operator is a $G_\delta$ set. In section 2, we show that this result holds for the set of supercyclic vectors of a set of operators and we prove that supercyclicity is preserved under quasi-similarity.
 In section 3, we introduce notions of supercylic transitive sets, strictly transitive sets, supertransitive sets, and the notion of supercyclic criterion for sets of operators. We give relations between these notions and the concept of supercyclic sets of operators and we prove that these notions are preserved under quasi-similarity or similarity. In section 4, we give some results for $C$-regularized group of operators. We prove that, if $(S(z))_{z\in\mathbb{C}}$ is a supercyclic $C$-regularized group of operators and $C$ has dense range, then $(S(z))_{z\in\mathbb{C}}$ is supercyclic transitive. At the end, we give some applications for strongly continuous semigroups of operators.
\section{ Supercyclic Sets of Operators}
\begin{definition}
Let $\Gamma\subset\mathcal{B}(X)$. We say that $\Gamma$ is a supercyclic set of operators or a supercyclic set if there exists $x\in X$ such that 
$$\mathbb{C} Orb(\Gamma,x):=\{\alpha Tx \mbox{ : }\alpha\in\mathbb{C}\mbox{, }T\in\Gamma\}$$
is a dense subset of $X$. 
Such vector $x$ is called a supercyclic vector for $\Gamma$ or a supercyclic vector. The set of all supercyclic vectors for $\Gamma$ is denoted by $SC(\Gamma)$.
\end{definition}
\begin{remark}
Let $T\in\mathcal{B}(X)$. Then $T$ is supercyclic as an operator if and only if the set
$$\Gamma=\{T^n\mbox{ : }n\geq0\}$$
is supercyclic as a set of operators.
\end{remark}
Let $\Gamma\subset\mathcal{B}(X).$ We denote by $\{\Gamma\}^{'}$ the set of all elements of $\mathcal{B}(X)$ which commute with every element of $\Gamma.$
\begin{proposition}\label{p1}
Let $\Gamma\subset\mathcal{B}(X)$ be a supercyclic set and $T$ an operator with dense. If $T\in\{\Gamma\}^{'}$, then $Tx\in SC(\Gamma),$ for all $x\in SC(\Gamma).$
\end{proposition}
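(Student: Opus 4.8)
The plan is to mimic the classical argument that a dense-range operator commuting with a supercyclic operator sends supercyclic vectors to supercyclic vectors. Let $x\in SC(\Gamma)$ and let $T\in\{\Gamma\}'$ have dense range. We must show $\mathbb{C}\,Orb(\Gamma,Tx)$ is dense in $X$. First I would write $\mathbb{C}\,Orb(\Gamma,Tx)=\{\alpha S(Tx):\alpha\in\mathbb{C},\ S\in\Gamma\}$ and use commutativity to rewrite each element as $\alpha S(Tx)=\alpha T(Sx)=T(\alpha Sx)$, so that $\mathbb{C}\,Orb(\Gamma,Tx)=T\bigl(\mathbb{C}\,Orb(\Gamma,x)\bigr)$.

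Next I would invoke continuity and the hypothesis that $x$ is a supercyclic vector for $\Gamma$: since $\mathbb{C}\,Orb(\Gamma,x)$ is dense in $X$ and $T$ is continuous, $T\bigl(\overline{\mathbb{C}\,Orb(\Gamma,x)}\bigr)\subseteq \overline{T\bigl(\mathbb{C}\,Orb(\Gamma,x)\bigr)}$, hence $T(X)\subseteq\overline{T\bigl(\mathbb{C}\,Orb(\Gamma,x)\bigr)}=\overline{\mathbb{C}\,Orb(\Gamma,Tx)}$. Finally, because $T$ has dense range, $\overline{T(X)}=X$, so taking closures once more gives $X=\overline{T(X)}\subseteq\overline{\mathbb{C}\,Orb(\Gamma,Tx)}$, i.e. $\mathbb{C}\,Orb(\Gamma,Tx)$ is dense and $Tx\in SC(\Gamma)$.

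There is no real obstacle here; the only point requiring a little care is the algebraic identity $\alpha S(Tx)=T(\alpha Sx)$, which needs $T$ to commute with every $S\in\Gamma$ and to be linear (so that it pulls the scalar $\alpha$ out) — both are part of the hypotheses, since elements of $\mathcal{B}(X)$ are linear and $T\in\{\Gamma\}'$. One should also note in passing that $Tx\neq 0$: otherwise $\mathbb{C}\,Orb(\Gamma,Tx)=\{0\}$ could not be dense when $\dim(X)>1$; but this is automatic from the conclusion, since we have just shown the orbit is dense. (Presumably the intended reading of the statement is ``an operator $T$ with dense range''.)
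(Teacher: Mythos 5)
Your proof is correct and is essentially the argument the paper gives: both rest on the identity $\alpha S(Tx)=T(\alpha Sx)$ from $T\in\{\Gamma\}'$ together with the fact that a continuous operator with dense range sends dense sets to dense sets (the paper phrases this via nonempty open preimages $T^{-1}(O)$, you via the inclusion $T(\overline{A})\subseteq\overline{T(A)}$, which are the same fact). Your parenthetical reading of the hypothesis as ``dense range'' matches how the paper actually uses it.
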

\begin{proof}
Let $O$ be a nonempty open subset of $X$. Since $T$ is continuous and of a dense range, $T^{-1}(O)$ is a nonempty open subset of $X$. Let $x\in SC(\Gamma)$, then there exist $\alpha\in\mathbb{C}$ and $S\in \Gamma$ such that $\alpha Sx\in T^{-1}(O)$, that is $\alpha T(Sx)\in O$. Since $T\in \{\Gamma\}^{'}$, it follows that
$$\alpha S(Tx)=\alpha T(S x)\in O.$$
Hence, $\mathbb{C}Orb(\Gamma,Tx)$  meets every nonempty open subset of $X$. From this, we deduce that $\mathbb{C}Orb(\Gamma,Tx)$ is dense in $X$. That is, $Tx\in SC(\Gamma)$. 
\end{proof}
\begin{corollary}
Let $\Gamma\subset\mathcal{B}(X)$ be a supercyclic set. If $x\in SC(\Gamma)$, then $\alpha x\in SC(\Gamma)$, for all $\alpha \in \mathbb{C}\setminus\{0\}.$
\end{corollary}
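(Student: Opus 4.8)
The plan is to deduce this immediately from Proposition \ref{p1}. Given $\alpha\in\mathbb{C}\setminus\{0\}$, I would apply that proposition with the operator $T=\alpha I$, where $I$ denotes the identity on $X$. Two things need to be checked, both essentially formal: first, that $\alpha I\in\{\Gamma\}^{'}$, which is clear since any scalar multiple of the identity commutes with every element of $\mathcal{B}(X)$, hence in particular with every element of $\Gamma$; second, that $\alpha I$ has dense range, which holds because $\alpha\neq 0$ makes $\alpha I$ a (continuous, linear) bijection of $X$ onto itself, so its range is all of $X$. Proposition \ref{p1} then yields $(\alpha I)x=\alpha x\in SC(\Gamma)$ for every $x\in SC(\Gamma)$.

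Alternatively, and perhaps more transparently, I would argue directly at the level of the orbit: for $\alpha\neq 0$ one has
$$\mathbb{C}Orb(\Gamma,\alpha x)=\{\beta T(\alpha x)\,:\,\beta\in\mathbb{C},\ T\in\Gamma\}=\{(\beta\alpha)Tx\,:\,\beta\in\mathbb{C},\ T\in\Gamma\}=\mathbb{C}Orb(\Gamma,x),$$
where the last equality uses that the map $\beta\mapsto\beta\alpha$ is a bijection of $\mathbb{C}$ onto itself. Since $x\in SC(\Gamma)$ means $\mathbb{C}Orb(\Gamma,x)$ is dense in $X$, the set $\mathbb{C}Orb(\Gamma,\alpha x)$ is then dense as well, i.e.\ $\alpha x\in SC(\Gamma)$.

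There is no real obstacle here; the statement is a direct corollary. The only point that requires the hypothesis $\alpha\neq 0$ is precisely the one highlighted above — that multiplication by $\alpha$ be a bijection of $\mathbb{C}$ (equivalently, that $\alpha I$ have dense range) — and for $\alpha=0$ the conclusion fails whenever $0\notin\overline{\{0\}}$ is not dense, i.e.\ as soon as $\dim(X)>1$, which is in force throughout. I would present the proof in the one-line form using Proposition \ref{p1}.
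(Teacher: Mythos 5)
Your proof is correct and is exactly the paper's argument: the authors likewise apply Proposition \ref{p1} to $T=\alpha I$, noting it has dense range and lies in $\{\Gamma\}^{'}$. The direct orbit computation you offer as an alternative is also valid, but your main route coincides with the paper's.
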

\begin{proof}
Let $\alpha \in \mathbb{C}\setminus\{0\}$ and $x\in SC(\Gamma)$. Then $T=\alpha I$ has dense range and $T\in \{\Gamma\}^{'}$. By Proposition \ref{p1}, we deduce that $\alpha x\in SC(\Gamma)$.
\end{proof}

Let $X$ and $Y$ be topological vector spaces and let $\Gamma\subset \mathcal{B}(X)$ and $\Gamma_1\subset \mathcal{B}(Y)$. Recall \cite{AO}, that $\Gamma$ and $\Gamma_1$ are called quasi-similar if there exists a continuous map $\phi$ : $X\longrightarrow Y$ with dense range such that for all $T\in\Gamma,$ there exists $S\in\Gamma_1$ satisfying $S\circ\phi=\phi\circ T$. If $\phi$ is a
homeomorphism, then $\Gamma$ and $\Gamma_1$ are called similar.


It has shown in \cite{Erdmann Peris} that the supercyclicity of a single operator is preserved under quasi-similarity.
In the following, we prove that the same result holds for sets of operators.
\begin{proposition}\label{14}
Let $X$ and $Y$ be topological vector spaces and let $\Gamma\subset \mathcal{B}(X)$ and $\Gamma_1\subset \mathcal{B}(Y)$. If $\Gamma$ and $\Gamma_1$ are quasi-similar, then $\Gamma$ is supercyclic in $X$ implies that $\Gamma_1$ is supercyclic in $Y$. Moreover, 
$$\phi(SC(\Gamma)\subset SC(\Gamma_1).$$
\end{proposition}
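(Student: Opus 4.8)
The plan is to argue directly from the definition of supercyclicity, transporting a supercyclic vector of $\Gamma$ along the map $\phi$. First I would fix $x \in SC(\Gamma)$ and set $y = \phi(x) \in Y$; the goal is to show $\mathbb{C}\,Orb(\Gamma_1, y)$ is dense in $Y$, which simultaneously proves $\Gamma_1$ is supercyclic and establishes the inclusion $\phi(SC(\Gamma)) \subset SC(\Gamma_1)$.

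The key computation is that for any $S \in \Gamma_1$ arising from some $T \in \Gamma$ via the intertwining relation $S \circ \phi = \phi \circ T$, and any scalar $\alpha \in \mathbb{C}$, we have $\alpha S y = \alpha S(\phi(x)) = \alpha \phi(Tx) = \phi(\alpha Tx)$. Hence $\mathbb{C}\,Orb(\Gamma_1, y) \supseteq \{\phi(\alpha T x) : \alpha \in \mathbb{C},\ T \in \Gamma\} = \phi(\mathbb{C}\,Orb(\Gamma, x))$, using the linearity of $\phi$ to pull the scalar out. Since $x$ is supercyclic for $\Gamma$, the set $\mathbb{C}\,Orb(\Gamma, x)$ is dense in $X$; since $\phi$ is continuous with dense range, $\phi$ maps dense sets to dense sets (the closure of $\phi(D)$ contains $\phi(\overline{D}) = \phi(X)$, whose closure is all of $Y$), so $\phi(\mathbb{C}\,Orb(\Gamma, x))$ is dense in $Y$. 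Therefore the larger set $\mathbb{C}\,Orb(\Gamma_1, y)$ is dense in $Y$, which gives $y = \phi(x) \in SC(\Gamma_1)$ and in particular $\Gamma_1$ is supercyclic.

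I expect there is no serious obstacle here; this is essentially the standard quasi-similarity transfer argument (as for a single operator in \cite{Erdmann Peris}) adapted to the set-valued setting. The one point to be careful about is the quantifier structure in the definition of quasi-similarity for sets: each $T \in \Gamma$ yields \emph{some} $S \in \Gamma_1$ with $S\phi = \phi T$, and this is exactly what is needed to show the orbit of $y$ under $\Gamma_1$ dominates the $\phi$-image of the orbit of $x$ under $\Gamma$ — we do not need a converse correspondence. A minor subtlety worth noting explicitly is that pulling the scalar $\alpha$ through $\phi$ uses that $\phi$ is linear, which is part of the hypothesis that $\phi$ is a continuous linear map with dense range (an intertwiner in the category of topological vector spaces).
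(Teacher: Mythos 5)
Your proof is correct and is essentially the paper's own argument: both rest on the computation $\alpha S(\phi x)=\alpha\phi(Tx)$ from the intertwining relation, the paper phrasing the density transfer via nonempty preimages $\phi^{-1}(O)$ of open sets while you phrase it as ``a continuous map with dense range sends dense sets to dense sets'' --- the same fact in dual form. Your explicit remark that pulling $\alpha$ through $\phi$ uses homogeneity of $\phi$ is a point the paper also silently relies on.
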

\begin{proof}
Let $O$ be a nonempty open subset of $Y$, then $\phi^{-1}(O)$ is a nonempty open subset of $X$. If $x\in SC(\Gamma)$, then there exist $\alpha\in\mathbb{C}$ and $T\in\Gamma$ such that $\alpha Tx\in \phi^{-1}(O)$, that is $\alpha\phi(Tx)\in O$. Let $S\in\Gamma_1$ such that $S\circ\phi=\phi\circ T$. Hence,
$$\alpha S(\phi x) =\alpha\phi(Tx)\in O.$$
Thus, $\mathbb{C}Orb(\Gamma_1,\phi x)$ meets every nonempty open subset of $Y$. From this, we deduce that $\mathbb{C}Orb(\Gamma_1,\phi x)$ is dense in $Y$. That is, $\Gamma_1$ is supercyclic and $\phi x\in SC(\Gamma_1)$.
\end{proof}
\begin{corollary}
Let $X$ and $Y$ be topological vector spaces and let $\Gamma\subset \mathcal{B}(X)$ and $\Gamma_1\subset \mathcal{B}(Y)$. If $\Gamma$ and $\Gamma_1$ are similar, then $\Gamma$ is supercyclic in $X$ if and only if $\Gamma_1$ is supercyclic in $Y$. Moreover, 
$$\phi(SC(\Gamma)= SC(\Gamma_1).$$
\end{corollary}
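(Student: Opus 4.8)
The plan is to deduce this directly from Proposition \ref{14}, the point being that when the intertwining map $\phi$ is a homeomorphism, similarity of $\Gamma$ and $\Gamma_1$ becomes a \emph{symmetric} relation. Concretely, let $\phi:X\longrightarrow Y$ be a homeomorphism realizing the similarity of $\Gamma$ and $\Gamma_1$, so that for each $T\in\Gamma$ there is $S\in\Gamma_1$ with $S\circ\phi=\phi\circ T$. I would first observe that $\phi^{-1}:Y\longrightarrow X$ is again a homeomorphism, hence continuous and of dense range (indeed it is onto), and that $T\circ\phi^{-1}=\phi^{-1}\circ S$ whenever $S\circ\phi=\phi\circ T$; thus $\phi^{-1}$ realizes the similarity of $\Gamma_1$ and $\Gamma$. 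Granting this, Proposition \ref{14} applies both ways: $\Gamma$ supercyclic in $X$ gives $\Gamma_1$ supercyclic in $Y$ together with $\phi(SC(\Gamma))\subset SC(\Gamma_1)$, and $\Gamma_1$ supercyclic in $Y$ gives $\Gamma$ supercyclic in $X$ together with $\phi^{-1}(SC(\Gamma_1))\subset SC(\Gamma)$. The asserted equivalence of supercyclicity is then immediate.

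For the ``moreover'' part I would combine the two inclusions produced above. One containment, $\phi(SC(\Gamma))\subset SC(\Gamma_1)$, is in hand. From $\phi^{-1}(SC(\Gamma_1))\subset SC(\Gamma)$, applying the bijection $\phi$ to both sides and using $\phi\bigl(\phi^{-1}(A)\bigr)=A$ yields the reverse containment $SC(\Gamma_1)\subset\phi(SC(\Gamma))$. Together these give $\phi(SC(\Gamma))=SC(\Gamma_1)$. It is worth emphasising that at this last step one genuinely uses that $\phi$ is a bijection, not merely that it has dense range, which is why the corollary upgrades the one-sided inclusion of Proposition \ref{14} to an equality.

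The only step that is not purely formal — and hence the main thing to get right — is the reversal of the intertwining correspondence $T\mapsto S$: to invoke Proposition \ref{14} in the direction $\Gamma_1\Rightarrow\Gamma$ one needs that every $S\in\Gamma_1$ arises as $\phi\circ T\circ\phi^{-1}$ for some $T\in\Gamma$. If the definition of similarity is read as demanding only ``for each $T$ there is $S$'', this is not automatic; I would handle it either by adopting the natural convention that the pairing is a bijection (equivalently $\Gamma_1=\phi\Gamma\phi^{-1}$), or by arguing directly, exactly as in the proof of Proposition \ref{14}: given a supercyclic vector $y$ for $\Gamma_1$ and a nonempty open $U\subset X$, the set $\phi(U)$ is nonempty open in $Y$, so $\alpha S y\in\phi(U)$ for some $\alpha\in\mathbb{C}$, $S\in\Gamma_1$, and choosing $T\in\Gamma$ with $S\circ\phi=\phi\circ T$ one gets $\alpha T\bigl(\phi^{-1}y\bigr)=\alpha\phi^{-1}(Sy)\in U$, which shows $\phi^{-1}(y)\in SC(\Gamma)$. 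Apart from this bookkeeping point the corollary is a routine ``apply the previous proposition twice and intersect'' argument.
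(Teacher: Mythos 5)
Your proof is correct and takes the route the paper intends: the corollary is stated there without proof, as an immediate double application of Proposition \ref{14} using that $\phi^{-1}$ is again continuous with dense range and intertwines $\Gamma_1$ with $\Gamma$. Your observation that the stated definition of similarity (``for each $T\in\Gamma$ there exists $S\in\Gamma_1$'') is not literally symmetric, and your direct argument patching this, address a point the paper silently glosses over.
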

\begin{proposition}\label{p3}
Let $\Gamma\subset\mathcal{B}(X)$ and $\{\alpha_T\}_{T\in\Gamma}$ a sequence of nonzero complex numbers. Then, $\Gamma$ is supercyclic if and only if $\Gamma_1=\{ \alpha_T T\mbox{ : }T\in\Gamma \}$ is supercyclic. Moreover, $\Gamma$ and $\Gamma_1$ have the same supercyclic vectors.
\end{proposition}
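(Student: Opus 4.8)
The plan is to reduce the statement to the single observation that the \emph{scaled} orbits of $\Gamma$ and $\Gamma_1$ at any vector coincide, i.e.\ that $\mathbb{C}Orb(\Gamma,x)=\mathbb{C}Orb(\Gamma_1,x)$ for every $x\in X$. Granting this, the proposition is immediate: by definition, $\Gamma$ (resp.\ $\Gamma_1$) is supercyclic with supercyclic vector $x$ exactly when $\mathbb{C}Orb(\Gamma,x)$ (resp.\ $\mathbb{C}Orb(\Gamma_1,x)$) is dense in $X$, so the equality of these two sets forces $SC(\Gamma)=SC(\Gamma_1)$, and in particular one set is supercyclic if and only if the other is.

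To prove the set equality, fix $x\in X$ and unwind the definitions. Every element of $\Gamma_1$ is of the form $\alpha_T T$ for some $T\in\Gamma$, so
$$\mathbb{C}Orb(\Gamma_1,x)=\{\beta(\alpha_T T)x\ :\ \beta\in\mathbb{C},\ T\in\Gamma\}=\bigcup_{T\in\Gamma}\{(\beta\alpha_T)Tx\ :\ \beta\in\mathbb{C}\}.$$
For each fixed $T\in\Gamma$, the hypothesis $\alpha_T\neq0$ makes $\beta\mapsto\beta\alpha_T$ a bijection of $\mathbb{C}$ onto itself, hence $\{(\beta\alpha_T)Tx:\beta\in\mathbb{C}\}=\{\gamma Tx:\gamma\in\mathbb{C}\}$. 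Taking the union over $T\in\Gamma$ yields $\mathbb{C}Orb(\Gamma_1,x)=\mathbb{C}Orb(\Gamma,x)$, as desired. (It is worth noting that $\Gamma_1$ may have fewer elements than $\Gamma$ — distinct $T,T'\in\Gamma$ could satisfy $\alpha_T T=\alpha_{T'}T'$ — but this is harmless, since we only ever compare unions of complex rays $\mathbb{C}Tx$.)

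There is essentially no obstacle to overcome here; the content of the proposition is precisely that a fixed nonzero scalar factor $\alpha_T$ becomes invisible once one passes to the scaled orbit, because multiplication by $\alpha_T$ permutes $\mathbb{C}$. This should be contrasted with the plain orbit $Orb(\Gamma,x)$, which genuinely changes under the rescaling; it is the extra freedom in the scalar $\alpha$ built into the definition of $\mathbb{C}Orb$ that absorbs the constants.
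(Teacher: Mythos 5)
Your proposal is correct and follows exactly the paper's route: the paper's proof is the one-line observation that $\mathbb{C}Orb(\Gamma,x)=\mathbb{C}Orb(\Gamma_1,x)$ for every $x\in X$, from which density of one scaled orbit is equivalent to density of the other. You simply spell out the justification (that $\beta\mapsto\beta\alpha_T$ is a bijection of $\mathbb{C}$ since $\alpha_T\neq0$) which the paper leaves implicit.
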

\begin{proof}
If $x\in X$, then $\mathbb{C}Orb(\Gamma,x)=\mathbb{C}Orb(\Gamma_1,x).$ Hence, $\overline{\mathbb{C}Orb(\Gamma,x)}=\overline{\mathbb{C}Orb(\Gamma_1,x)}$ and the proof is completed.
\end{proof}
Let $\{X_i\}_{i=1}^{n}$ be a family of topological vector spaces and let $\Gamma_i$ be a subset of $\mathcal{B}(X_i)$, for all $1\leq i\leq n$. Define 
$$\oplus_{i=1}^nX_i=X_1\times X_2\times\dots \times X_n=\{(x_1,x_2,\dots,x_n) \mbox{ : }x_i\in X_i\mbox{, }1\leq i\leq n\},$$
and
$$\oplus_{i=1}^n\Gamma_i=\{T_1\times T_2\times\dots \times T_n\mbox{ : }T_i\in\Gamma_i\mbox{, }1\leq i\leq n\}.$$
\begin{proposition}\label{p4}
Let $\{X_i\}_{i=1}^{n}$ be a family of topological vector spaces and $\Gamma_i$ a subset of
$\mathcal{B}(X_i),$ for all $1\leq i\leq n$. If $\oplus_{i=1}^n\Gamma_i$ is a supercyclic set in $\oplus_{i=1}^n X_i$, then $\Gamma_i$ is a supercyclic set in $X_i$, for all $1\leq i\leq n$. Moreover, if $(x_1,x_2,\dots,x_n)\in SC(\oplus_{i=1}^n\Gamma_i)$, then $x_i\in SC(\Gamma_i)$, for all $1\leq i\leq n$.
\end{proposition}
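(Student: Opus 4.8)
The plan is to realise each factor $\Gamma_i$ as a quasi-similar image of the direct sum $\oplus_{j=1}^n\Gamma_j$ and then invoke Proposition \ref{14}. Fix $i\in\{1,\dots,n\}$ and consider the canonical coordinate projection $\pi_i:\oplus_{j=1}^nX_j\longrightarrow X_i$, $(x_1,\dots,x_n)\mapsto x_i$. This map is linear, continuous and surjective, hence in particular it has dense range, which is the hypothesis on $\phi$ needed in the definition of quasi-similarity recalled just before Proposition \ref{14}.

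Next I would verify the intertwining condition. Let $T=T_1\times\cdots\times T_n$ be an arbitrary element of $\oplus_{j=1}^n\Gamma_j$, so $T_j\in\Gamma_j$ for each $j$. Taking the companion operator $S=T_i\in\Gamma_i$, one computes, for every $(x_1,\dots,x_n)\in\oplus_{j=1}^nX_j$,
\[
\pi_i\big(T(x_1,\dots,x_n)\big)=\pi_i(T_1x_1,\dots,T_nx_n)=T_ix_i=S\big(\pi_i(x_1,\dots,x_n)\big),
\]
so that $\pi_i\circ T=S\circ\pi_i$. Hence $\oplus_{j=1}^n\Gamma_j$ and $\Gamma_i$ are quasi-similar, with $\phi=\pi_i$.

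Now Proposition \ref{14} applies verbatim: since $\oplus_{j=1}^n\Gamma_j$ is supercyclic in $\oplus_{j=1}^nX_j$, the set $\Gamma_i$ is supercyclic in $X_i$, and moreover $\pi_i\big(SC(\oplus_{j=1}^n\Gamma_j)\big)\subset SC(\Gamma_i)$. In particular, if $(x_1,\dots,x_n)\in SC(\oplus_{j=1}^n\Gamma_j)$ then $x_i=\pi_i(x_1,\dots,x_n)\in SC(\Gamma_i)$. Since $i$ was arbitrary, repeating the argument for every $i\in\{1,\dots,n\}$ yields the full statement.

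There is essentially no real obstacle here; the only points deserving a moment's attention are the \emph{direction} of the intertwining relation (one must exhibit, for each $T$ in the source set, a companion operator in the target set, and the coordinate structure of $\oplus_{j=1}^n\Gamma_j$ supplies exactly such an $S=T_i$) and the fact that a surjective continuous linear map automatically has dense range. If one preferred to avoid citing Proposition \ref{14}, a direct argument works just as well: given a supercyclic vector $(x_1,\dots,x_n)$ for $\oplus_{j=1}^n\Gamma_j$ and a nonempty open set $O\subset X_i$, apply density of $\mathbb{C}\,Orb(\oplus_{j=1}^n\Gamma_j,(x_1,\dots,x_n))$ to the nonempty open set $\pi_i^{-1}(O)$ to produce $\alpha\in\mathbb{C}$ and $T_i\in\Gamma_i$ with $\alpha T_ix_i\in O$; but the route through quasi-similarity is shorter.
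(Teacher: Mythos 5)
Your proof is correct, but it takes a different route from the paper. The paper argues directly: given $(x_1,\dots,x_n)\in SC(\oplus_{i=1}^n\Gamma_i)$ and nonempty open sets $O_i\subset X_i$, it tests density of the scaled orbit against the product open set $O_1\times\cdots\times O_n$ and extracts a single $\alpha\in\mathbb{C}$ and operators $T_i\in\Gamma_i$ with $\alpha T_ix_i\in O_i$ for every $i$ simultaneously, which settles all coordinates at once. You instead factor the statement through Proposition \ref{14} by exhibiting each coordinate projection $\pi_i$ as a quasi-similarity from $\oplus_{j=1}^n\Gamma_j$ onto $\Gamma_i$; your verification of the intertwining condition $\pi_i\circ T=T_i\circ\pi_i$ and of the dense-range hypothesis is complete, so the transfer of supercyclicity and of supercyclic vectors is legitimate. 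Your approach is more modular and reuses an already-established transfer principle (and your closing remark correctly identifies the paper's direct argument as the alternative, except that the paper works with the full product open set rather than a single $\pi_i^{-1}(O)$); the paper's approach is self-contained and yields the slightly stronger observation that one scalar and one element of $\oplus_{i=1}^n\Gamma_i$ can be chosen to approximate all targets simultaneously, though that extra information is not needed for the stated conclusion.
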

\begin{proof}
Let $(x_1,x_2,\dots,x_n)\in SC(\oplus_{i=1}^n\Gamma_i)$. For all $1\leq i\leq n$, let $O_i$ be a nonempty open subset of $X_i$, then $ O_1\times O_2\times\dots\times O_n$ is a nonempty open subset of $\oplus_{i=1}^n X_i$. Since $Orb(\oplus_{i=1}^n\Gamma_i,\oplus_{i=1}^n x_i)$ is dense in $\oplus_{i=1}^n X_i$, there exist $\alpha\in\mathbb{C}$ and $T_i\in\Gamma_i$; $1\leq i\leq n$ such that
$$(\alpha T_1 x_1,\alpha T_2 x_2,\dots,\alpha T_n x_n)=\alpha (T_1\times T_2\times\dots \times T_n)(x_1,x_2,\dots,x_n) \in  O_1\times O_2\times\dots\times O_n,$$
that is $\alpha T_i x_i\in O_i$, for all $1\leq i\leq n$. Hence, $\Gamma_i$ is a supercyclic set in $X_i$ and $x_i\in SC(\Gamma_i)$, for all $1\leq i\leq n$. 
\end{proof}

A subset of $X$ is said to be $G_\delta$ type if it's an intersection of a countable collection of open sets, see \cite{Willered}.
\begin{proposition}\label{p5}
Let $X$ be a second countable topological vector space and $\Gamma\subset\mathcal{B}(X)$ a supercyclic set. Then,
$$SC(\Gamma)=\bigcap_{n\geq1}\left(\bigcup_{\beta\in\mathbb{C}\setminus\{0\}}\bigcup_{T\in \Gamma} T^{-1}(\beta U_{n})\right),$$
where $(U_n)_{n\geq1}$ is a countable basis of the topology of $X$. As a consequence, $SC(\Gamma)$ is a $G_\delta$ type set.
\end{proposition}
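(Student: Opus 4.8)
The plan is to unwind the definition of $SC(\Gamma)$ and rewrite it as a countable intersection of open sets, using that $(U_n)_{n\ge1}$ is a basis of the topology of $X$. By definition, $x\in SC(\Gamma)$ exactly when $\mathbb{C}Orb(\Gamma,x)$ is dense in $X$, and a subset of $X$ is dense if and only if it meets every nonempty basic open set $U_n$; so $x\in SC(\Gamma)$ if and only if for each $n\ge1$ there are $\alpha\in\mathbb{C}$ and $T\in\Gamma$ with $\alpha Tx\in U_n$. For a fixed $n$ and $T\in\Gamma$, the condition ``$\alpha Tx\in U_n$ for some nonzero $\alpha$'' is equivalent to ``$x\in T^{-1}(\beta U_n)$ for some $\beta\in\mathbb{C}\setminus\{0\}$'', simply via $\beta=\alpha^{-1}$ (here $T$ is continuous and multiplication by $\beta$ is a homeomorphism, so all these sets make sense and are open). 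Hence the claimed identity will follow once it is checked that, in the density condition above, the scalar $\alpha$ may always be taken to be nonzero.

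That reduction to nonzero scalars is the only genuinely non-formal point. If $0\notin U_n$ there is nothing to prove: any $\alpha$ with $\alpha Tx\in U_n$ is automatically nonzero. If $0\in U_n$, I would argue as follows. Since $\mathbb{C}Orb(\Gamma,x)$ is dense and $\dim X>1$, it cannot reduce to $\{0\}$, so $T_0x\ne0$ for some $T_0\in\Gamma$; as $U_n$ is then a neighbourhood of $0$ and the map $\lambda\mapsto\lambda T_0x$ is continuous, $\lambda T_0x\in U_n$ for all sufficiently small $\lambda$, and choosing such a $\lambda\ne0$ produces a nonzero element $\lambda T_0x\in\mathbb{C}Orb(\Gamma,x)\cap U_n$. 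Thus whenever $\mathbb{C}Orb(\Gamma,x)$ meets $U_n$ it meets $U_n$ in a nonzero point, i.e.\ $\bigl(\mathbb{C}\setminus\{0\}\bigr)Orb(\Gamma,x)\cap U_n\ne\emptyset$. Combining this with the first paragraph yields
$$SC(\Gamma)=\bigcap_{n\ge1}\left(\bigcup_{\beta\in\mathbb{C}\setminus\{0\}}\bigcup_{T\in\Gamma}T^{-1}(\beta U_n)\right).$$

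Finally, for the $G_\delta$ conclusion I would note that for $\beta\ne0$ the set $\beta U_n$ is open (it is the image of the open set $U_n$ under the homeomorphism $y\mapsto\beta y$) and $T$ is continuous, so each $T^{-1}(\beta U_n)$ is open; hence each inner set $\bigcup_{\beta\ne0}\bigcup_{T\in\Gamma}T^{-1}(\beta U_n)$ is open, being an arbitrary union of open sets, and $SC(\Gamma)$, as the intersection of countably many of these, is a $G_\delta$ set. I expect the handling of the zero scalar together with the zero vector in the case $0\in U_n$ to be the only obstacle; everything else is routine manipulation of the density condition and of preimages under continuous maps.
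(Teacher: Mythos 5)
Your proof is correct and takes essentially the same route as the paper's: unwind the density of $\mathbb{C}Orb(\Gamma,x)$ against the countable basis $(U_n)_{n\geq1}$ and observe that each inner union $\bigcup_{\beta\neq0}\bigcup_{T\in\Gamma}T^{-1}(\beta U_n)$ is open. You are in fact slightly more careful than the paper, which passes from an arbitrary scalar $\alpha\in\mathbb{C}$ to a nonzero $\beta$ without comment; your argument discarding the zero scalar when $0\in U_n$ fills that small gap.
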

\begin{proof}
Suppose that $\Gamma$ is a supercyclic set. Then, $x\in SC(\Gamma)$ if and only if $\overline{\mathbb{C}Orb(\Gamma,x)} = X$.
Equivalently, for all $n\geq1$ we have $U_n\cap \mathbb{C}Orb(\Gamma,x)\neq\emptyset$. That is, for all $n \geq 1$ there exist $\alpha\in\mathbb{C}$ and
$T\in\Gamma$ such that $x \in \alpha T^{-1}(U_n)$. This is equivalent to the fact that 
$\displaystyle x\in\bigcap_{n\geq1}\left(\bigcup_{\beta\in\mathbb{C}\setminus\{0\}}\bigcup_{T\in \Gamma} T^{-1}(\beta U_{n})\right)$. Hence, $\displaystyle SC(\Gamma)=\bigcap_{n\geq1}\left(\bigcup_{\beta\in\mathbb{C}\setminus\{0\}}\bigcup_{T\in \Gamma} T^{-1}(\beta U_{n})\right)$.

Since $\displaystyle \bigcup_{\beta\in\mathbb{C}\setminus\{0\}}\bigcup_{T\in \Gamma} T^{-1}(\beta U_{n})$ is an open subset of X, for all $n \geq 1$, $SC(\Gamma)$ is a $G_\delta$ type.                                                   
\end{proof}
\section{Density and Transitivity of Sets of Operators} 
Supercyclic transitivity of a single operator was introduced in \cite{Feldman1}. In the following definition, we extend this notion to sets of operators.
\begin{definition}
Let $\Gamma\subset\mathcal{B}(X)$. We say that $\Gamma$ is a supercyclic transitive set of operators or a supercyclic tarnsitive set, if for each pair of nonempty open subsets $(U,V)$ of $X$, there exists some $\alpha\in\mathbb{C}\setminus\{0\}$ and some $T\in \Gamma$ such that 
$$T(\alpha U)\cap V\neq\emptyset.$$
\end{definition}
\begin{remark}
Let $T\in\mathcal{B}(X)$. Then $T$ is supercyclic transitive as an operator if and only if the set
$$\Gamma=\{T^n\mbox{ : }n\geq0\}$$
is supercyclic transitive as a set of operators.
\end{remark}

The supercyclic transitivity of a single operators is preserved under quasi-similarity, see \cite[Proposition 1.13]{Erdmann Peris}. The following proposition proves that the same result holds for sets of operators.
\begin{proposition}\label{prop1}
Let $X$ and $Y$ be topological vector spaces and let $\Gamma\subset \mathcal{B}(X)$ be quasi-similar to $\Gamma_1\subset \mathcal{B}(Y)$. If $\Gamma$ is supercyclic transitive in $Y$, then $\Gamma_1$ is supercyclic transitive in $Y$.
\end{proposition}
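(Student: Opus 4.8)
The plan is to argue exactly as in the proof of Proposition \ref{14}, transporting a pair of open sets from $Y$ back to $X$ through the intertwining map and then pushing the witnessing configuration forward again. Let $\phi\colon X\longrightarrow Y$ be the continuous map with dense range realizing the quasi-similarity, so that for every $T\in\Gamma$ there is some $S\in\Gamma_1$ with $S\circ\phi=\phi\circ T$.

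First I would fix a pair $(U_1,V_1)$ of nonempty open subsets of $Y$ and set $U=\phi^{-1}(U_1)$ and $V=\phi^{-1}(V_1)$. By continuity of $\phi$ these are open in $X$, and since $\phi$ has dense range each of $U_1$ and $V_1$ meets $\phi(X)$, so $U$ and $V$ are nonempty. Next, because $\Gamma$ is supercyclic transitive in $X$, there exist $\alpha\in\mathbb{C}\setminus\{0\}$ and $T\in\Gamma$ with $T(\alpha U)\cap V\neq\emptyset$; by linearity this yields a vector $u\in U$ with $\alpha Tu\in V$.

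Then I would choose $S\in\Gamma_1$ with $S\circ\phi=\phi\circ T$ and compute
$$S(\alpha\phi(u))=\alpha S(\phi(u))=\alpha\phi(Tu)=\phi(\alpha Tu).$$
Since $u\in U=\phi^{-1}(U_1)$ we have $\phi(u)\in U_1$, hence $\alpha\phi(u)\in\alpha U_1$; and since $\alpha Tu\in V=\phi^{-1}(V_1)$ we have $\phi(\alpha Tu)\in V_1$. Therefore $S(\alpha U_1)\cap V_1\neq\emptyset$, which shows $\Gamma_1$ is supercyclic transitive in $Y$.

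There is no serious obstacle here; the only points requiring care are that the preimages $U=\phi^{-1}(U_1)$ and $V=\phi^{-1}(V_1)$ are genuinely nonempty, which is exactly where the density of the range of $\phi$ is used, and keeping the inclusions in the correct direction, namely $\phi(\phi^{-1}(U_1))\subseteq U_1$, so that the forward image of the witness lands in the target open sets. I would also remark, as a corollary, that if $\phi$ is a homeomorphism then $\Gamma$ and $\Gamma_1$ being similar makes supercyclic transitivity equivalent for the two sets, by applying the proposition to $\phi$ and to $\phi^{-1}$.
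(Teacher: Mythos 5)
Your proposal is correct and follows essentially the same route as the paper's proof: pull the open pair back through $\phi$ (using continuity and dense range to get nonempty open preimages), apply the transitivity of $\Gamma$ in $X$, and push the witness forward via the intertwining relation $S\circ\phi=\phi\circ T$. The only difference is cosmetic -- you are more careful than the paper about the typographical confusion between $X$ and $Y$ in the statement, and you make the nonemptiness of the preimages explicit.
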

\begin{proof}
Since $\Gamma$ and $\Gamma_1$ are quasi-similar, there exists a continuous map $\phi$ : $X\longrightarrow Y$ with dense range such that for all $T\in\Gamma,$ there exists $S\in\Gamma_1$ satisfying $S\circ\phi=\phi\circ T$. 
Let $U$ and $V$ be nonempty open subsets of $X$. Since $\phi$ is continuous and of dense range, $\phi^{-1}(U)$ and $\phi^{-1}(V)$ are nonempty and open. Since $\Gamma$ is supercyclic transitive in $X$, there exist  $y\in \phi^{-1}(U)$ and $\alpha\in\mathbb{C},$ $T\in\Gamma$ with $\alpha Ty\in\phi^{-1}(V)$, which implies that $\phi(y)\in U$ and $\alpha \phi(Ty)\in V$. Let $S\in\Gamma$ such that $S\circ\phi=\phi\circ T$. Then, $\phi(y)\in U$ and $\alpha S\phi(y)\in V$. Thus, $\alpha S(U)\cap V\neq\emptyset.$ Hence, $\Gamma_1$ is supercyclic transitive in $Y.$
\end{proof}
\begin{corollary}
Let $X$ and $Y$ be topological vector spaces and let $\Gamma\subset \mathcal{B}(X)$ be similar to $\Gamma_1\subset \mathcal{B}(Y)$. Then, $\Gamma$ is supercyclic transitive in $Y$ if and only if $\Gamma_1$ is supercyclic transitive in $Y$.
\end{corollary}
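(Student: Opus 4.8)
The plan is to read this off from Proposition \ref{prop1}, the essential point being that the similarity relation between sets of operators is symmetric. One implication is immediate: a homeomorphism $\phi:X\longrightarrow Y$ is in particular a continuous map with dense range, so two similar sets are in particular quasi-similar, and Proposition \ref{prop1} applied to $\phi$ gives that if $\Gamma$ is supercyclic transitive in $X$ then $\Gamma_1$ is supercyclic transitive in $Y$.

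For the reverse implication I would first verify that $\phi^{-1}:Y\longrightarrow X$ witnesses a similarity of $\Gamma_1$ with $\Gamma$. The inverse $\phi^{-1}$ is again a homeomorphism, hence continuous with dense range; and composing the intertwining identity $S\circ\phi=\phi\circ T$ on the left and on the right by $\phi^{-1}$ turns it into $\phi^{-1}\circ S=T\circ\phi^{-1}$, i.e. $T\circ\phi^{-1}=\phi^{-1}\circ S$. Thus $\phi^{-1}$ realizes $\Gamma_1$ as similar to $\Gamma$, and a second application of Proposition \ref{prop1}, now to the pair $(\Gamma_1,\Gamma)$ and the map $\phi^{-1}$, yields that $\Gamma_1$ supercyclic transitive in $Y$ implies $\Gamma$ supercyclic transitive in $X$. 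Combining the two implications gives the stated equivalence.

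The step that requires the most care — and the only genuine obstacle — is precisely this symmetry: one must make sure that, after conjugating by $\phi^{-1}$, the correspondence $T\mapsto S$ coming from the similarity of $\Gamma$ with $\Gamma_1$ is exhibited, in the direction $S\mapsto T$, as a correspondence satisfying the defining condition of similarity for the pair $(\Gamma_1,\Gamma)$. Once this bookkeeping is settled, the corollary is nothing more than two applications of Proposition \ref{prop1} and no new estimate is needed; alternatively one may bypass the symmetry remark and simply repeat the proof of Proposition \ref{prop1} verbatim with $\phi$ replaced by $\phi^{-1}$ and the open sets $U,V\subset X$ replaced by their images $\phi(U),\phi(V)\subset Y$, using that a homeomorphism is an open map.
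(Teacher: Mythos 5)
The forward implication of your argument is fine and is the intended route: a homeomorphism is in particular a continuous map with dense range, so Proposition \ref{prop1} applies directly to $\phi$. The problem is the reverse implication, and you have actually put your finger on the weak point without resolving it. With the paper's definition, ``$\Gamma$ similar to $\Gamma_1$ via $\phi$'' means: for \emph{every} $T\in\Gamma$ there \emph{exists} $S\in\Gamma_1$ with $S\circ\phi=\phi\circ T$. Conjugating that identity by $\phi^{-1}$, as you do, only shows that each already-matched pair $(T,S)$ also satisfies $T\circ\phi^{-1}=\phi^{-1}\circ S$; it does not produce, for an \emph{arbitrary} $S\in\Gamma_1$, some $T\in\Gamma$ intertwined with it, which is exactly what ``$\Gamma_1$ similar to $\Gamma$ via $\phi^{-1}$'' requires. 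The correspondence $T\mapsto S$ need not be onto $\Gamma_1$, so the relation is not symmetric and your second application of Proposition \ref{prop1} is not licensed. Your fallback (rerunning the proof of Proposition \ref{prop1} with $\phi^{-1}$ and the images $\phi(U),\phi(V)$) hits the same wall: transitivity of $\Gamma_1$ hands you some $S\in\Gamma_1$ and $u\in U$ with $\alpha S\phi(u)\in\phi(V)$, and to pull this back to $X$ you need a $T\in\Gamma$ with $\phi\circ T=S\circ\phi$ for \emph{that particular} $S$, which the hypothesis does not supply.

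Taken literally, the reverse implication is in fact false under this definition: let $X=Y$, $\phi=I$, let $\Gamma_1$ be any supercyclic transitive set containing $I$, and let $\Gamma=\{I\}$. For every $T\in\Gamma$ the choice $S=T\in\Gamma_1$ satisfies $S\circ\phi=\phi\circ T$, so $\Gamma$ is similar to $\Gamma_1$ in the paper's sense; $\Gamma_1$ is supercyclic transitive, but $\{I\}$ is not when $\dim X>1$ (take $U$, $V$ small neighbourhoods of two linearly independent vectors; then $\alpha U\cap V=\emptyset$ for all $\alpha$). The paper offers no proof of the corollary and presumably has the same symmetry argument in mind, so the defect is really in the statement/definition rather than something you could have repaired: the corollary (and your proof) become correct if ``similar'' is read symmetrically, i.e.\ one additionally requires that for every $S\in\Gamma_1$ there exist $T\in\Gamma$ with $S\circ\phi=\phi\circ T$. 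Under that reading your two applications of Proposition \ref{prop1} are exactly right. (Note also that the first occurrence of ``in $Y$'' in the statement should read ``in $X$''.)
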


In the following result, we give necessary and sufficient conditions for a set of operators to be supercyclic transitive.
\begin{theorem}\label{tt}
Let $X$ be a normed space and $\Gamma\subset \mathcal{B}(X)$. The following assertions are equivalent: 
\begin{itemize}
\item[$(i)$] $\Gamma$ is supercyclic transitive;
\item[$(ii)$] For each $x$, $y\in X,$ there exists sequences $\{k\}$ in $\mathbb{N}$, $\{x_k\}$ in $X$, $\{\alpha_k\}$ in $\mathbb{C}$ and $\{T_k\}$ in $\Gamma$ such that
$$x_k\longrightarrow x\hspace{0.3cm}\mbox{ and }\hspace{0.3cm}T_k(\alpha_k x_k)\longrightarrow y;$$
\item[$(iii)$] For each $x$, $y\in X$ and for $W$ a neighborhood of $0$, there exist $z\in X$, $\alpha\in\mathbb{C}$ and $T\in\Gamma$  such that 
$$x-z\in W \hspace{0.3cm}\mbox{ and }\hspace{0.3cm} T(\alpha z)-y\in W. $$ 
\end{itemize}
\end{theorem}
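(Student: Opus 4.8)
The plan is to prove the three implications $(i)\Rightarrow(iii)\Rightarrow(ii)\Rightarrow(i)$, unwinding each condition into the definition of supercyclic transitivity via the fact that in a normed space the balls $B(x,\varepsilon)$ form a base of neighbourhoods and $W$ a neighbourhood of $0$ can be taken to be a ball $B(0,\varepsilon)$.

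First I would show $(i)\Rightarrow(iii)$. Fix $x,y\in X$ and a neighbourhood $W$ of $0$; choose $\varepsilon>0$ with $B(0,\varepsilon)\subset W$ and set $U=B(x,\varepsilon)$ and $V=B(y,\varepsilon)$, two nonempty open sets. By $(i)$ there are $\alpha\in\mathbb{C}\setminus\{0\}$ and $T\in\Gamma$ with $T(\alpha U)\cap V\neq\emptyset$, i.e.\ there is $z\in U$ with $T(\alpha z)\in V$. Then $x-z\in B(0,\varepsilon)\subset W$ and $T(\alpha z)-y\in B(0,\varepsilon)\subset W$, which is exactly $(iii)$.

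Next, $(iii)\Rightarrow(ii)$: given $x,y\in X$, apply $(iii)$ with $W=B(0,1/k)$ for each $k\geq 1$ to obtain $z_k\in X$, $\alpha_k\in\mathbb{C}$, $T_k\in\Gamma$ with $\|x-z_k\|<1/k$ and $\|T_k(\alpha_k z_k)-y\|<1/k$; setting $x_k:=z_k$ gives $x_k\to x$ and $T_k(\alpha_k x_k)\to y$, which is $(ii)$. Finally $(ii)\Rightarrow(i)$: let $U,V$ be nonempty open sets, pick $x\in U$, $y\in V$, and take the sequences from $(ii)$; since $x_k\to x\in U$ and $T_k(\alpha_k x_k)\to y\in V$ with $U,V$ open, for $k$ large we have $x_k\in U$ and $T_k(\alpha_k x_k)\in V$, so $x_k\in U$ and $\alpha_k x_k\in\alpha_k U$ gives $T_k(\alpha_k U)\cap V\ni T_k(\alpha_k x_k)$, hence $T_k(\alpha_k U)\cap V\neq\emptyset$. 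One minor point to address: in $(i)$ the scalar $\alpha$ is required to be nonzero, whereas in $(ii)$–$(iii)$ it is only taken in $\mathbb{C}$; however, one may always discard finitely many indices and, more importantly, if $\alpha_k=0$ then $T_k(\alpha_k x_k)=0$, and unless $y=0$ this forces $0\in V$, so by shrinking $V$ away from $0$ (possible since $\dim X>1$ and $V$ is open, or simply by choosing $y\neq 0$, which we can do for all but one trivial target) the relevant $\alpha_k$ are eventually nonzero — I would phrase the argument so this is transparent.

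The routine verifications are all of the "translate balls into the definition" type; the only genuine subtlety is the bookkeeping around the nonzero-scalar requirement in $(i)$ versus the plain $\mathbb{C}$ in $(ii)$ and $(iii)$, so that is the step I would be most careful with, making sure the equivalence is not broken by the degenerate case $\alpha=0$.
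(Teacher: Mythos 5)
Your proof is correct and takes essentially the same approach as the paper's: both are the routine translation between open balls and the definition of supercyclic transitivity, with you merely traversing the cycle of implications in the opposite direction ($(i)\Rightarrow(iii)\Rightarrow(ii)\Rightarrow(i)$ instead of the paper's $(i)\Rightarrow(ii)\Rightarrow(iii)\Rightarrow(i)$). Your attention to the nonzero-scalar mismatch between $(i)$ and $(ii)$--$(iii)$ is actually more careful than the paper, which silently ignores the point, and your fix (choose $y\neq 0$ in $V$, so that $T_k(\alpha_k x_k)\to y$ forces $\alpha_k\neq 0$ for large $k$) is the right one.
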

\begin{proof}$(i)\Rightarrow(ii)$
Let $x$, $y\in X$. For all $k\geq1$, let $U_k=B(x,\frac{1}{k})$ and $V_k=B(y,\frac{1}{k})$. Then $U_k$ and $V_k$ are nonempty open subsets of $X$. Since $\Gamma$ is supercyclic transitive, there exist $\alpha_k\in\mathbb{C}$ and $T_k\in\Gamma$ such that $T_k(\alpha_k U_k)\cap V_k\neq\emptyset$. For all $k\geq1$, let $x_k\in U_k$ such that $ T_k(\alpha x_k)\in V_k$, then 
$$\Vert x_k-x \Vert<\frac{1}{k}\hspace{0.3cm}\mbox{ and }\hspace{0.3cm}\Vert T_k(\alpha x_k)-y \Vert<\frac{1}{k}$$
which implies that 
$$x_k\longrightarrow x\hspace{0.3cm}\mbox{ and }\hspace{0.3cm} T_k(\alpha x_k)\longrightarrow y.$$

$(ii)\Rightarrow(iii)$ Let $x$, $y\in X$. There exists sequences $\{k\}$ in $\mathbb{N}$, $\{x_k\}$ in $X$  $\{\alpha_k\}$ in $\mathbb{C}$ and $\{T_k\}$ in $\Gamma$ such that
$$x_k-x\longrightarrow 0\hspace{0.3cm}\mbox{ and }\hspace{0.3cm}T_k(\alpha_k x_k)-y\longrightarrow 0.$$
Let $W$ be a neighborhood of zero, then there exists $N\in\mathbb{N}$ such that 
$$x-x_k\in W\hspace{0.3cm}\mbox{ and }\hspace{0.3cm}T_k(\alpha_k x_k)-y\in W,$$ 
for all $k\geq N$.

$(iii)\Rightarrow(i)$ Let $U$ and $V$ be two nonempty open subsets of $X$. There exists $x$, $y\in X$ such that $x\in U$ and $y\in V$. Since for all $k\geq1$,  $W_k=B(0,\frac{1}{k})$ is a neighborhood of $0$, there exist $z_k\in X,$ $\alpha_k\in\mathbb{C}$ and $T_k\in\Gamma$ such that 
$$\Vert x-z_k\Vert<\frac{1}{k}\hspace{0.3cm}\mbox{ and }\hspace{0.3cm}\Vert T_k(\alpha_k z_k)-y\Vert<\frac{1}{k}.$$
This implies that 
$$z_k\longrightarrow x\hspace{0.3cm}\mbox{ and }\hspace{0.3cm}T_k(\alpha_k z_k)\longrightarrow y.$$
Since $U$ and $V$ are nonempty open subsets of $X$, $x\in U$ and $y\in V$, there exists $N\in\mathbb{N}$ such that $z_k\in U$ and $T_k(\alpha_k z_k)\in V$, for all $k\geq N.$ 
\end{proof}

An operator is supercyclic if and only if it is supercyclic transitive\cite{Feldman1}. Let $\Gamma\subset\mathcal{B}(X)$. In what follows, we prove that $\Gamma$ is topologically transitive implies that $\Gamma$ is supercyclic.
\begin{theorem}\label{t1}
Let $X$ be a second countable Baire topological vector space and $\Gamma$ a subset of $\mathcal{B}(X)$. The following assertions are equivalent$:$
\begin{itemize}
\item[$(i)$] $SC(\Gamma)$ is dense in $X$;
\item[$(ii)$] $\Gamma$ is supercylic transitive.
\end{itemize}
As a consequence$;$ a supercyclic transitive set is supercyclic.
\end{theorem}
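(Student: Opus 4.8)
The plan is to run the standard Birkhoff-type transitivity argument, adapted to projective (scalar-times-operator) orbits, and to handle the two implications separately.

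For $(i)\Rightarrow(ii)$ I would fix nonempty open sets $U,V\subset X$ and, using density of $SC(\Gamma)$, pick a supercyclic vector $x\in U\cap SC(\Gamma)$. Before applying density of $\mathbb{C}Orb(\Gamma,x)$ I would first replace $V$ by a nonempty open subset $V'\subseteq V$ with $0\notin V'$; this is possible because $X$ is Hausdorff and, since $\dim X>1$, no nonempty open subset of $X$ equals $\{0\}$. Density of $\mathbb{C}Orb(\Gamma,x)$ then gives $\alpha\in\mathbb{C}$ and $T\in\Gamma$ with $\alpha Tx\in V'$, and necessarily $\alpha\neq0$. Since $T$ is linear, $T(\alpha x)=\alpha Tx\in V$ with $\alpha x\in\alpha U$, so $T(\alpha U)\cap V\neq\emptyset$, which is exactly supercyclic transitivity.

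For $(ii)\Rightarrow(i)$ I would use second countability to fix a countable basis $(U_n)_{n\ge1}$ of the topology of $X$ and, exactly as in the proof of Proposition \ref{p5} (whose argument does not really use supercyclicity of $\Gamma$), write $SC(\Gamma)=\bigcap_{n\ge1}G_n$ where $G_n=\bigcup_{\beta\in\mathbb{C}\setminus\{0\}}\bigcup_{T\in\Gamma}T^{-1}(\beta U_n)$ is open for every $n$. The key step is to show that supercyclic transitivity forces each $G_n$ to be dense: given a nonempty open $U\subset X$, apply transitivity to the pair $(U,U_n)$ to obtain $\alpha\neq0$ and $T\in\Gamma$ with $T(\alpha U)\cap U_n\neq\emptyset$, i.e. some $u\in U$ with $\alpha Tu\in U_n$, hence $u\in T^{-1}(\alpha^{-1}U_n)\subseteq G_n$; thus $U\cap G_n\neq\emptyset$. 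Since $X$ is a Baire space, $SC(\Gamma)=\bigcap_nG_n$ is then dense in $X$; in particular it is nonempty, so $\Gamma$ is supercyclic, which also yields the stated consequence.

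The only genuinely delicate point is the bookkeeping with the scalar $\alpha$: one must make sure the scalar produced is nonzero, which is exactly where Hausdorffness and $\dim X>1$ enter (to discard the degenerate situation in which the only witness to $\mathbb{C}Orb(\Gamma,x)$ meeting an open set is $\alpha=0$), and one must check this is consistent with the rewriting $\alpha Tx\in U_n\Leftrightarrow x\in T^{-1}(\alpha^{-1}U_n)$ behind the formula for $SC(\Gamma)$. Everything else is routine basis-and-Baire-category manipulation, and I expect no real obstacle there.
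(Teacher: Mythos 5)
Your proof is correct and follows essentially the same route as the paper: the reverse implication is the identical Baire-category argument via the $G_\delta$ representation $SC(\Gamma)=\bigcap_{n}G_n$ from Proposition \ref{p5}, and the forward implication differs only in that you pick a supercyclic vector directly in $U$ rather than reading density of each $G_n$ off that same formula. Your explicit shrinking of $V$ to an open $V'$ avoiding $0$, so as to guarantee the scalar $\alpha$ is nonzero, addresses a degeneracy the paper's proof glosses over and is a welcome extra bit of care.
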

\begin{proof} Since $X$ is a second countable topological vector space$,$ we can consider $(U_m)_{m\geq1}$ a countable basis of the topology of $X.$\\  
$(i)\Rightarrow (ii) :$ Assume that $SC(\Gamma)$ is dense in $X$ and let $U$, $V$ be two nonempty open subsets of $X$.  By Proposition \ref{p5}, we have 
$$SC(\Gamma)=\bigcap_{n\geq1}\left(\bigcup_{\beta\in\mathbb{C}\setminus\{0\}}\bigcup_{T\in \Gamma} T^{-1}(\beta U_{n})\right).$$
Hence, for all $n\geq 1$ the set $\displaystyle A_n=\bigcup_{\beta\in\mathbb{C}\setminus\{0\}}\bigcup_{T\in \Gamma} T^{-1}(\beta U_{n})$ is dense in $X$. Thus, for all $n,$ $m\geq 1$, we have $A_n\cap U_m\neq\emptyset$ which implies that for all $n,$ $m\geq 1$ there exist $\beta\in\mathbb{C}\setminus\{0\}$ and $T\in \Gamma$, such that $T(\beta U_m)\cap U_n\neq\emptyset$. Since $(U_m)_{m\geq1}$ is a countable basis of the topology of $X$, it follows that $\Gamma$ is supercyclic transitive.\\
$(ii)\Rightarrow (i) : $ Let $n,$ $m\geq 1$, then there exist $\beta\in \mathbb{C}\setminus\{0\}$ and $T\in \Gamma$ such that $T(\beta U_m)\cap U_n\neq \emptyset$ which implies that  $T^{-1}(\frac{1}{\beta} U_n)\cap U_m\neq \emptyset$. Hence, for all $n\geq1$ the set $\displaystyle\bigcup_{T\in \Gamma}\bigcup_{\beta\in\mathbb{C}\setminus\{0\}} T^{-1}(\beta U_{n})$ is dense in $X$. Since $X$ is a Baire space, it follows that 
$$SC(\Gamma)=\bigcap_{n\geq1}\left(\bigcup_{\beta\in\mathbb{C}\setminus\{0\}}\bigcup_{T\in \Gamma} T^{-1}(\beta U_{n})\right)$$
is a dense subset of $X$.
\end{proof}
The converse of Theorem \ref{t1} is holds with additional assumptions, to show that, we need the next lemma.
\begin{lemma}\label{le1}
Let $X$ be a topological vector space without isolated point and let $\Gamma\subset\mathcal{B}(X)$ a supercyclic set. If $T\in\Gamma$, then $\Gamma\setminus\{T\}$ is supercyclic. Moreover, $\Gamma$ and $\Gamma\setminus\{T\}$ have the same supercyclic vectors.
\end{lemma}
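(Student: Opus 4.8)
The plan is to prove the stronger identity $SC(\Gamma)=SC(\Gamma\setminus\{T\})$; since $\Gamma$ is supercyclic we have $SC(\Gamma)\neq\emptyset$, so this identity immediately gives that $\Gamma\setminus\{T\}$ is supercyclic too. One inclusion is free: from $\Gamma\setminus\{T\}\subset\Gamma$ one gets $\mathbb{C}Orb(\Gamma\setminus\{T\},x)\subset\mathbb{C}Orb(\Gamma,x)$ for every $x\in X$, so density of the former forces density of the latter, i.e. $SC(\Gamma\setminus\{T\})\subset SC(\Gamma)$.

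For the reverse inclusion I would fix $x\in SC(\Gamma)$ and split the orbit according to whether the operator equals $T$:
$$\mathbb{C}Orb(\Gamma,x)=\mathbb{C}Orb(\Gamma\setminus\{T\},x)\cup\mathbb{C}Tx,\qquad\mathbb{C}Tx:=\{\alpha Tx:\alpha\in\mathbb{C}\}.$$
First note $\Gamma\setminus\{T\}\neq\emptyset$, for otherwise $\mathbb{C}Orb(\Gamma,x)=\mathbb{C}Tx$ would be dense, which is impossible since $\dim X>1$; hence $\mathbb{C}Orb(\Gamma\setminus\{T\},x)$ is a nonempty cone and in particular already contains $0$. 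If $Tx=0$ the two orbits coincide and there is nothing to do. If $Tx\neq0$, then $\mathbb{C}Tx$ is the line spanned by $Tx$, a proper (since $\dim X>1$) closed subspace, hence of empty interior. Taking closures in the identity above and using $\overline{A\cup B}=\overline A\cup\overline B$ one obtains $X=\overline{\mathbb{C}Orb(\Gamma\setminus\{T\},x)}\cup\mathbb{C}Tx$; were the first set not all of $X$, its complement would be a nonempty open subset of $X$ contained in $\mathbb{C}Tx$, contradicting empty interior. Thus $x\in SC(\Gamma\setminus\{T\})$.

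The one step that needs care — and the only place the hypotheses enter — is the claim that $\mathbb{C}Tx$ has empty interior: for $Tx\neq0$ this is the standard fact that a proper linear subspace of a topological vector space has empty interior (an interior point would translate the interior onto $0$, and then the subspace would contain an absorbing neighbourhood of $0$, hence be the whole space), which is exactly where $\dim X>1$ is used; for $Tx=0$ one could instead invoke the ``without isolated point'' hypothesis (that $\{0\}$ is not open), although as remarked above the cone observation makes it unnecessary. Everything else is soft point-set topology — in particular, removing from a dense set a set whose closure has empty interior leaves it dense, with no Baire category needed — so I do not expect any genuine obstacle.
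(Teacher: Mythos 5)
Your proposal is correct and is essentially the paper's own argument: the paper fixes a nonempty open set $O$, observes that $O\setminus\mathrm{span}\{Tx\}$ is still nonempty and open (because $\mathrm{span}\{Tx\}$ is a closed set with empty interior when $\dim X>1$), and notes that any $\alpha Sx$ landing there must have $S\neq T$ --- which is exactly your ``a dense set minus a closed nowhere-dense line stays dense'' step, just phrased locally rather than via closures of the union decomposition. Your write-up is in fact slightly more careful than the paper's, since you explicitly dispose of the degenerate cases $\Gamma=\{T\}$ and $Tx=0$, which the paper passes over silently.
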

\begin{proof}
Let $O$ be a nonempty open subset of $X$ and $T\in\Gamma.$ If $x\in SC(\Gamma)$, then $O\setminus $span$\{Tx\}$ is a nonempty open subset of $X.$ There exist $\alpha\in\mathbb{C}$ and $S\in \Gamma\setminus\{T\}$
such that 
$$\alpha Sx\in O\setminus \mbox{span}\{Tx\}\subset O.$$
Hence, $\mathbb{C} Orb(\Gamma\setminus\{T\},x)$  meets every nonempty open subset of $X$. From this, we deduce that $\mathbb{C}Orb(\Gamma\setminus\{T\},x)$ is a dense subset of $X$. That is, 
$\Gamma\setminus\{T\}$ is a supercyclic set and 
$$SC(\Gamma)=SC(\Gamma\setminus\{T\}).$$
\end{proof}
\begin{theorem}\label{so}
Assume that $X$ is without isolated point and let $\Gamma\subset\mathcal{B}(X)$ such that for all $T$, $S\in\Gamma$ with $T\neq S$, there exists $A\in\Gamma$ such that $T=AS$. Then $\Gamma$ is supercyclic implies that $\Gamma$ is supercyclic transitive.
\end{theorem}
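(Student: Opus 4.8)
The plan is to reduce Theorem~\ref{so} to the transitivity criterion from Theorem~\ref{t1}, namely that it suffices to show $\Gamma$ is supercyclic transitive by producing, for each pair $(U,V)$ of nonempty open sets, some $\alpha\in\mathbb{C}\setminus\{0\}$ and $T\in\Gamma$ with $T(\alpha U)\cap V\neq\emptyset$. Fix a supercyclic vector $x\in SC(\Gamma)$, whose existence is granted by hypothesis. The key structural assumption is the semigroup-like factorization: for any two \emph{distinct} elements $T,S\in\Gamma$ there is $A\in\Gamma$ with $T=AS$. The idea is to use one element of $\Gamma$ applied to $x$ to "land in $U$" and then, via the factorization, rewrite the element that "lands in $V$" as a product that passes through $U$.

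More precisely, here is the sequence of steps I would carry out. First, pick nonempty open $U,V\subseteq X$. Since $x$ is supercyclic for $\Gamma$ and $X$ has no isolated points, Lemma~\ref{le1} lets us discard finitely many elements of $\Gamma$ without destroying supercyclicity of $x$; this will be used to guarantee the distinctness needed to apply the factorization hypothesis. Using density of $\mathbb{C}\,Orb(\Gamma,x)$, choose $\beta\in\mathbb{C}\setminus\{0\}$ and $S\in\Gamma$ with $\beta Sx\in U$. Next, choose $\gamma\in\mathbb{C}\setminus\{0\}$ and $T\in\Gamma$ with $\gamma Tx\in V$, where, by Lemma~\ref{le1} applied to remove $S$ if necessary, we may arrange $T\neq S$. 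By the factorization hypothesis there is $A\in\Gamma$ with $T=AS$. Then
$$
A\!\left(\tfrac{\gamma}{\beta}\,(\beta Sx)\right)=\tfrac{\gamma}{\beta}\,A S x=\tfrac{\gamma}{\beta}\,Tx=\gamma Tx\in V,
$$
so with the point $u:=\beta Sx\in U$ and the scalar $\lambda:=\gamma/\beta\in\mathbb{C}\setminus\{0\}$ we get $A(\lambda u)\in V$, i.e. $A(\lambda U)\cap V\neq\emptyset$. Hence $\Gamma$ is supercyclic transitive, and the conclusion follows from Theorem~\ref{t1}.

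The step I expect to be the main obstacle — and the one requiring the most care — is the handling of the distinctness condition "$T\neq S$" that is needed before invoking the factorization. If it happens that the only way to land in $V$ (for the chosen $U$-approximant $S$) uses the same operator $S$, one must instead re-choose: shrink $V$ to $V\setminus\mathrm{span}\{Sx\}$, which is still nonempty open since $X$ has no isolated point, and re-run the density argument to get $\gamma Tx\in V\setminus\mathrm{span}\{Sx\}$ with necessarily $T\neq S$ (as $Tx\notin\mathrm{span}\{Sx\}$ forces $T\ne S$). This is exactly the mechanism already exploited in the proof of Lemma~\ref{le1}, so the argument is internally consistent; one just has to be careful that $\gamma Tx\notin\mathrm{span}\{Sx\}$ indeed implies $Tx\neq Sx$ as vectors, hence $T\neq S$ as elements of $\Gamma$. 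A secondary routine point is to confirm that the rescaling by $\gamma/\beta$ causes no trouble because all scalars involved are nonzero, which is built into the definition of $\mathbb{C}\,Orb$ and of supercyclic transitivity.
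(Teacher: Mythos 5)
Your proof is correct, and its core is the same as the paper's: fix a supercyclic vector $x$, use density of $\mathbb{C}\,Orb(\Gamma,x)$ to hit $U$ and $V$ by $\beta Sx$ and $\gamma Tx$ respectively, and then use the factorization $T=AS$ to carry a point of a scalar multiple of $U$ into $V$. The genuine difference is in the degenerate case $T=S$, which you rightly identify as the delicate step. The paper disposes of it by declaring that ``we may suppose $I\in\Gamma$'' (so that $T=S$ yields $I(U)\cap V\neq\emptyset$), citing Lemma~\ref{le1}; but that lemma only licenses \emph{deleting} elements of $\Gamma$, not adjoining the identity, and establishing transitivity of $\Gamma\cup\{I\}$ with witness $I$ would not by itself give transitivity of $\Gamma$. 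Your route --- delete $S$ from $\Gamma$ via Lemma~\ref{le1}, or equivalently replace $V$ by $V\setminus\mathrm{span}\{Sx\}$ (nonempty and open since $\dim X>1$ and finite-dimensional subspaces are closed), so that the operator hitting $V$ is forced to differ from $S$ before the factorization hypothesis is invoked --- sidesteps this entirely and is the more careful argument; it also automatically guarantees $\gamma\neq 0$, since $0\in\mathrm{span}\{Sx\}$, a point worth noting because $\mathbb{C}\,Orb(\Gamma,x)$ as defined does admit the scalar $0$. A final cosmetic difference: you factor the operator landing in $V$ through the one landing in $U$ and obtain $A(\lambda U)\cap V\neq\emptyset$ directly, whereas the paper factors the other way and gets $U\cap A(\mu V)\neq\emptyset$, which suffices only after swapping the roles of the arbitrary open sets $U$ and $V$.
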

\begin{proof}
Since $\Gamma$ is supercyclic, there exists $x\in X$ such that $\mathbb{C}Orb(\Gamma,x)$ is a dense subset of $X$. By Lemma \ref{le1} and since $X$ is without isolated point, we may suppose that $I\in\Gamma$ and $\mathbb{C}\setminus\{0\}Orb(\Gamma,x)$ is dense in $X$.
Let $U$ and $V$ be two nonempty open subsets of $X$, then there exist $\alpha$, $\beta\in\mathbb{C}\setminus\{0\}$, and $T$, $S\in\Gamma$ such that
\begin{equation}\label{e1}
\alpha Tx\in U \hspace{0.6cm}\mbox{ and }\hspace{0.6cm}\beta Sx\in V. 
\end{equation}
If $T=S$, then $U\cap V\neq \emptyset$ which means that $I(U)\cap V\neq\emptyset.$ Since $I\in\Gamma$, the result holds.\\
If $T\neq S$, then there exists $A\in\Gamma$ such that $T=AS$. By (\ref{e1}), we have
$$\alpha A(Sx)\in U \hspace{0.2cm}\mbox{ and }\hspace{0.2cm}\beta A(Sx)\in  A(V)$$
which implies that $U\cap A(\frac{\alpha}{\beta} V)\neq \emptyset$. 
Hence, $\Gamma$ is supercyclic transitive.
\end{proof}
\begin{definition}
A set $\Gamma\subset\mathcal{B}(X)$ is called strictly transitive if for each pair of nonzero elements $x,$ $y$ in $X$, there exist some $\alpha\in\mathbb{C}$ and $T\in\Gamma$ such that $\alpha Tx=y.$ An operator $T\in \mathcal{B}(X)$ is strictly transitive if 
$$\Gamma=\{T^n\mbox{ : }n\geq0\} $$
is a strictly transitive set.
\end{definition}
\begin{proposition}
A strictly transitive set is supercyclic transitive. As a consequence, a strictly transitive set is supercyclic.
\end{proposition}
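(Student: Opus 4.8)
The plan is to read off the transitivity condition directly from the definition of strict transitivity, and then to derive supercyclicity by hand rather than through the Baire-category argument of Theorem~\ref{t1}, since here $X$ is only assumed to be a topological vector space with $\dim(X)>1$ and need not be second countable or Baire.

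For the first assertion (strictly transitive $\Rightarrow$ supercyclic transitive), I would start from a pair $(U,V)$ of nonempty open subsets of $X$. The preliminary observation is that $X\setminus\{0\}$ is dense in $X$: in a topological vector space of positive dimension $\{0\}$ cannot be a neighbourhood of $0$ (continuity of scalar multiplication would otherwise force $X=\{0\}$), so $\{0\}$ has empty interior. Hence $U$ and $V$ each meet $X\setminus\{0\}$; fix $x\in U\setminus\{0\}$ and $y\in V\setminus\{0\}$. Strict transitivity applied to the pair $(x,y)$ produces $\alpha\in\mathbb{C}$ and $T\in\Gamma$ with $\alpha Tx=y$. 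Since $y\neq 0$ this forces $\alpha\neq 0$, so $\alpha\in\mathbb{C}\setminus\{0\}$ as the definition of supercyclic transitivity demands. By linearity, $T(\alpha x)=\alpha Tx=y$, and as $\alpha x\in\alpha U$ and $y\in V$ we conclude $T(\alpha U)\cap V\neq\emptyset$. That is exactly supercyclic transitivity of $\Gamma$.

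For the consequence, I would argue directly instead of invoking ``supercyclic transitive $\Rightarrow$ supercyclic'' (which in this paper is only established under the extra hypotheses of Theorem~\ref{t1}). Fix any $x\in X\setminus\{0\}$. For each $y\in X\setminus\{0\}$, strict transitivity gives $\alpha\in\mathbb{C}$, $T\in\Gamma$ with $\alpha Tx=y$, so $y\in\mathbb{C}\,Orb(\Gamma,x)$; therefore $X\setminus\{0\}\subset\mathbb{C}\,Orb(\Gamma,x)$. Since $X\setminus\{0\}$ is dense in $X$, so is $\mathbb{C}\,Orb(\Gamma,x)$, whence $x\in SC(\Gamma)$ and $\Gamma$ is supercyclic.

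I do not expect any genuine obstacle: the statement is a soft consequence of the definitions. The only two points that need a moment's care are (i) that every nonempty open subset of $X$ meets $X\setminus\{0\}$, which rests on $\dim(X)>0$ so that $\{0\}$ is not open and $X\setminus\{0\}$ is dense, and (ii) that the scalar $\alpha$ supplied by strict transitivity is automatically nonzero whenever the target vector is nonzero, which is precisely what bridges the apparently weaker requirement ``$\alpha\in\mathbb{C}$'' in the definition of strict transitivity to the condition ``$\alpha\in\mathbb{C}\setminus\{0\}$'' appearing in supercyclic transitivity.
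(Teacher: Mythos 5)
Your proof is correct, and for the first assertion it follows essentially the same route as the paper: pick a point of $U$ and a point of $V$, apply strict transitivity, and read off $T(\alpha U)\cap V\neq\emptyset$. You are, however, more careful than the paper on two small but real points: the definition of strict transitivity only covers pairs of \emph{nonzero} vectors, so one must know that $U$ and $V$ meet $X\setminus\{0\}$ (your density argument via $\dim X>1$ supplies this; the paper silently picks arbitrary $x\in U$, $y\in V$), and the definition of supercyclic transitivity demands $\alpha\in\mathbb{C}\setminus\{0\}$ while strict transitivity only hands you $\alpha\in\mathbb{C}$ (your observation that $y\neq 0$ forces $\alpha\neq 0$ closes this). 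Where you genuinely diverge is the ``consequence'': the paper offers no argument and implicitly leans on the implication ``supercyclic transitive $\Rightarrow$ supercyclic,'' which in this paper is only proved (Theorem \ref{t1}) for second countable Baire spaces. Your direct argument --- every nonzero $y$ lies in $\mathbb{C}\,Orb(\Gamma,x)$ for any fixed nonzero $x$, and $X\setminus\{0\}$ is dense --- proves supercyclicity in an arbitrary topological vector space with $\dim X>1$, and in fact proves the stronger statement that every nonzero vector is supercyclic, i.e.\ that a strictly transitive set is supertransitive. So your version buys both greater generality and a sharper conclusion at no extra cost.
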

\begin{proof}
Let $\Gamma\subset\mathcal{B}(X)$ be a strictly transitive set. If $U$ and $V$ are two nonempty open subsets of $X$, there exist $x,$  $y\in X$ such that
$x\in U$ and $y\in V$. Since $\Gamma$ is strictly transitive, there exist $\alpha\in\mathbb{C}$ and $T\in\Gamma$ such that $\alpha Tx=y.$ Hence,
$$ \alpha Tx\in \alpha T(U)\hspace{0.3cm} \mbox{ and }\hspace{0.3cm}\alpha Tx\in V. $$
Thus, $\alpha T(U)\cap V\neq\emptyset,$ which implies that $\Gamma$ is supercyclic transitive.
\end{proof}

In the following proposition, we prove that strictly transitivity of sets of operators is preserved under similarity.
\begin{proposition}
Let $X$ and $Y$ be topological vector spaces and let $\Gamma\subset \mathcal{B}(X)$ be similar to $\Gamma_1\subset \mathcal{B}(Y)$. Then $\Gamma$ is strictly transitive in $X$ if and only $\Gamma_1$ is strictly transitive in $Y.$
\end{proposition}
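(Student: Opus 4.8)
The plan is to unwind the definition of similarity and transport single vectors back and forth through the homeomorphism $\phi$, exactly in the spirit of the proofs of Propositions \ref{14} and \ref{prop1}, but working with individual nonzero vectors instead of open sets. By hypothesis there is a homeomorphism $\phi\colon X\longrightarrow Y$ such that for every $T\in\Gamma$ there is some $S\in\Gamma_1$ with $S\circ\phi=\phi\circ T$. As in the earlier proofs I will use that $\phi$ is linear, so that $\phi(\alpha Tx)=\alpha\,\phi(Tx)$ for $\alpha\in\mathbb{C}$, and that $\phi$, being a linear homeomorphism, maps $0$ to $0$ and hence sends nonzero vectors to nonzero vectors; the same is true of $\phi^{-1}$.

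For the forward implication I would assume $\Gamma$ is strictly transitive in $X$ and fix nonzero $u,v\in Y$. Set $x=\phi^{-1}(u)$ and $y=\phi^{-1}(v)$; since $\phi^{-1}$ is a linear homeomorphism, $x$ and $y$ are nonzero. Applying strict transitivity of $\Gamma$ gives $\alpha\in\mathbb{C}$ and $T\in\Gamma$ with $\alpha Tx=y$. Choosing $S\in\Gamma_1$ with $S\circ\phi=\phi\circ T$ and applying $\phi$ together with its linearity,
$$\alpha S u=\alpha S\phi(x)=\alpha\,\phi(Tx)=\phi(\alpha Tx)=\phi(y)=v,$$
so $\Gamma_1$ is strictly transitive in $Y$.

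For the converse I would note that $\phi^{-1}\colon Y\longrightarrow X$ realizes a similarity from $\Gamma_1$ to $\Gamma$: from $S\circ\phi=\phi\circ T$ one obtains $\phi^{-1}\circ S=T\circ\phi^{-1}$, so the intertwining relation can be read in the reverse direction. The argument of the previous paragraph, with the roles of $\Gamma$ and $\Gamma_1$ and of $\phi$ and $\phi^{-1}$ interchanged, then shows that $\Gamma_1$ strictly transitive in $Y$ implies $\Gamma$ strictly transitive in $X$. Hence strict transitivity is preserved in both directions under similarity.

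Every step here is a one-line substitution, so there is no genuine obstacle; the only points deserving care are (i) that nonzero vectors have nonzero images under $\phi$ and $\phi^{-1}$ — which is precisely why a homeomorphism is needed rather than merely a continuous dense-range map as in the quasi-similar case, so that an analogue for \emph{quasi}-similarity would genuinely fail — and (ii) that the defining intertwining relation is used symmetrically so that $\phi^{-1}$ again intertwines the two families, which is consistent with the convention already adopted implicitly in the ``similar'' corollaries above.
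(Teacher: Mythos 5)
Your proof is correct and follows essentially the same route as the paper's: pull the two nonzero vectors of $Y$ back through $\phi^{-1}$, apply strict transitivity of $\Gamma$ in $X$, and push the relation $\alpha Tx=y$ forward through the intertwining $S\circ\phi=\phi\circ T$, with the converse handled symmetrically via $\phi^{-1}$. Your added remarks that $\phi$ and $\phi^{-1}$ preserve nonzeroness and that the intertwining relation reverses are points the paper leaves implicit, but they do not change the argument.
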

\begin{proof}
Since $\Gamma$ and $\Gamma_1$ are similar, there exists a homeomorphism $\phi$ : $X\longrightarrow Y$ such that for all $T\in\Gamma,$ there exists $S\in\Gamma_1$ satisfying $S\circ\phi=\phi\circ T$. Assume that $\Gamma$ is strictly transitive in $X$.
Let $x$, $y\in Y$. There exist $a,$ $b\in X$ such that $\phi(a)=x$ and $\phi(b)=y$. Since $\Gamma$ is strictly transitive in $X$, there exist $\alpha\in\mathbb{C}$ and $T\in \Gamma$ such that $\alpha Ta=b.$ Let $S\in\Gamma_1$ such that $S\circ\phi=\phi\circ T$, this implies that $\alpha Sx=y$. Hence $\Gamma_1$
is strictly transitive in $Y$.

For the converse, we do the same proof with $\phi^{-1}$ the invertible operator of $\phi,$ and the proof is completed.
\end{proof}
 In the following theorem, the proof is also true for norm-density if $X$ is assumed to be a normed linear space. 
\begin{theorem}
Let X be a topological vector space. Then for each pair of nonzero linearly independent vectors $x$, $y\in X$ there exists
a SOT-dense set $\Gamma_{xy}\subset\mathcal{B}(X)$ that is not strictly transitive. Furthermore, $\Gamma\subset\mathcal{B}(X)$ is a dense nonstrictly
transitive set if and only if $\Gamma$ is a dense subset of $\Gamma_{xy}$ for some $x$, $y\in X.$
\end{theorem}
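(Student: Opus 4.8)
The plan is to take $\Gamma_{xy}$ to be the \emph{maximal} set of operators for which the single pair $(x,y)$ obstructs strict transitivity, namely
$$\Gamma_{xy}:=\{T\in\mathcal{B}(X)\,:\,y\notin\mathbb{C}\,Tx\}=\{T\in\mathcal{B}(X)\,:\,\alpha Tx\neq y\ \text{for every}\ \alpha\in\mathbb{C}\},$$
a definition which in fact makes sense for any nonzero $x,y$. With this choice the failure of strict transitivity is built in: $x$ and $y$ are nonzero, and by the very definition of $\Gamma_{xy}$ there is no $T\in\Gamma_{xy}$ and no $\alpha\in\mathbb{C}$ with $\alpha Tx=y$, so $\Gamma_{xy}$ is not strictly transitive. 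Choosing the maximal such set is also what will make the subsequent equivalence an ``if and only if''. Thus the only substantive part of the first assertion is SOT-density.

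To prove density I would fix an arbitrary $S\in\mathcal{B}(X)$ together with a basic SOT-neighbourhood $\Omega=\{R\in\mathcal{B}(X):Re_i-Se_i\in U,\ i=1,\dots,k\}$ and exhibit a point of $\Omega\cap\Gamma_{xy}$. If $S\in\Gamma_{xy}$ there is nothing to do, so assume $y\in\mathbb{C}\,Sx$; then $Sx\neq0$ and $Sx=\mu y$ for some $\mu\neq0$. The crucial idea is to perturb $S$ by a small scalar multiple of the identity, which keeps us inside $\mathcal{B}(X)$ and avoids any appeal to continuous coordinate functionals. Since $U$ is a neighbourhood of $0$ and there are only finitely many $e_i$, continuity of scalar multiplication yields $\gamma\neq0$ with $\gamma e_i\in U$ for all $i$, so that $T:=S+\gamma I$ lies in $\Omega$. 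Moreover $Tx=\mu y+\gamma x$, and since $\{x,y\}$ is linearly independent and $\gamma\neq0$ one checks readily that $Tx\neq0$ and $Tx\notin\mathbb{C}y$, whence $y\notin\mathbb{C}\,Tx$, i.e. $T\in\Gamma_{xy}$. If $X$ is normed the same $T$ has $\|T-S\|=|\gamma|$, which gives norm-density as well.

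For the equivalence, reading ``for some $x,y\in X$'' as ``for some nonzero $x,y\in X$'', the implication $(\Leftarrow)$ is immediate: if $\Gamma$ is dense and $\Gamma\subseteq\Gamma_{xy}$, then $\Gamma$ is dense and the pair $(x,y)$ witnesses that $\Gamma$ is not strictly transitive, exactly as for $\Gamma_{xy}$ itself. For $(\Rightarrow)$, suppose $\Gamma$ is dense and not strictly transitive; unravelling the definition, there are nonzero $u,v\in X$ such that $\alpha Tu\neq v$ for every $\alpha\in\mathbb{C}$ and every $T\in\Gamma$, that is, $v\notin\mathbb{C}\,Tu$ for every $T\in\Gamma$, which says precisely $\Gamma\subseteq\Gamma_{uv}$. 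Hence $\Gamma$ is a dense subset of $\Gamma_{uv}$.

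The step I expect to require the most care is the SOT-density: one must keep the perturbed operator inside $\mathcal{B}(X)$, and in an arbitrary (possibly non-locally-convex) topological vector space the finite-rank perturbations one would naively use are unavailable, since there need be no continuous coordinate functionals; using $S+\gamma I$ is exactly what sidesteps this. A second point worth stating openly is that the pair $u,v$ produced in $(\Rightarrow)$ need not be linearly independent — it may even have $u=v$ — which is why the equivalence is naturally phrased for all nonzero pairs rather than only for the linearly independent ones appearing in the first assertion.
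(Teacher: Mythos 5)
Your proposal is correct and follows essentially the same route as the paper: the same obstruction set $\Gamma_{xy}$ (the paper phrases the defining condition as ``$Tx$ and $y$ are linearly independent,'' which differs from your $y\notin\mathbb{C}\,Tx$ only on operators with $Tx=0$), the same perturbation $S+\gamma I$ with a small nonzero scalar to obtain SOT-density, and the same unravelling of definitions for the equivalence. If anything your write-up is slightly more careful than the paper's, since you treat the $Tx=0$ case and explicitly flag that the pair $(u,v)$ produced in the forward direction of the equivalence need not be linearly independent, a point the paper passes over silently.
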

\begin{proof}
Fix nonzero linearly independent vectors $x,$ $y\in X$ and put 
$$\Gamma_{xy}=\{T\in\mathcal{B}(X) \mbox{ : }y\mbox{ and }Tx \mbox{ are linearly independent}\}.$$
It is clear that $\Gamma_{xy}$ is not strictly transitive. Let $\Omega$ be a nonempty
open set in $\mathcal{B}(X)$ and $S\in\Omega$. If $Sx$ and $y$ are linearly independent, then $S\in\Omega\cap\Gamma_{xy}$.
Otherwise, putting $S_n = S+\frac{1}{n}I$, we see that $S_k\in\Omega$ for some $k$, but $S_kx$ and $y$ are linearly independent. Hence, $\Omega\cap\Gamma_{xy}\neq\emptyset$ and the proof is completed.

We prove the second assertion of the theorem. Suppose that $\Gamma$ is a dense subset of $\mathcal{B}(X)$ that is not
strictly transitive. Then there exist nonzero vectors $x,$ $y\in X$ such that $Tx$ and $y$ are linearly independent for all $T\in \Gamma$ and hence $\Gamma\subset \Gamma_{xy}$.
To show that $\Gamma$ is dense in $\Gamma_{xy}$, assume that $\Omega_0$ is an open subset of $\Gamma_{xy}$. Thus, $\Omega_0= \Gamma_{xy}\cap\Omega$ for some open
set $\Omega$ in $\mathcal{B}(X)$. Then $\Gamma\cap \Omega_0= \Gamma\cap \Omega\neq\emptyset$.

For the converse, let $\Gamma$ be a dense subset of $\Gamma_{xy}$ for some $x,$ $y\in X$. Then $\Gamma$ is not strictly transitive. Also, since $\Gamma_{xy}$ is a dense open subset of $\mathcal{B}(X)$, we conclude that $\Gamma$ is also dense in $\mathcal{B}(X)$. Indeed,
if $\Omega$ is any open set in $\mathcal{B}(X)$ then $\Omega\cap\Gamma_{xy}\neq\emptyset$ since $\Gamma_{xy}$ is dense in $\mathcal{B}(X)$. On the other hand, $\Omega\cap\Gamma_{xy}$
is open in $\Gamma_{xy}$ and so it must intersect $\Gamma$ since $\Gamma$ is dense in $\Gamma_{xy}$. Thus,  
$\Omega\cap\Gamma\neq\emptyset$ and so $\Gamma$ is dense in $\mathcal{B}(X)$.
\end{proof}
\begin{corollary}
Let X be a topological vector space and $\Gamma$ be a dense subset of $\mathcal{B}(X)$. Then there is a subset $\Gamma_1$
of $\Gamma$ such that $\overline{\Gamma_1}= \mathcal{B}(X)$ and $\Gamma_1$ is not strictly transitive.
\end{corollary}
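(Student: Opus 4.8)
The plan is to apply the Theorem directly, choosing the auxiliary set $\Gamma_{xy}$ to ``trap'' a dense subset of $\Gamma$. Since $\dim(X)>1$, I would fix nonzero linearly independent vectors $x,y\in X$ and recall from the proof of the Theorem that
$$\Gamma_{xy}=\{T\in\mathcal{B}(X)\mbox{ : }y\mbox{ and }Tx\mbox{ are linearly independent}\}$$
is a SOT-dense open subset of $\mathcal{B}(X)$ which is not strictly transitive. I would then take $\Gamma_1=\Gamma\cap\Gamma_{xy}$ and check that it has the two required properties.

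The failure of strict transitivity for $\Gamma_1$ is immediate: $\Gamma_1\subset\Gamma_{xy}$, and for every $T\in\Gamma_{xy}$ the vectors $Tx$ and $y$ are linearly independent, so $\alpha Tx\neq y$ for all $\alpha\in\mathbb{C}$; hence there is no $\alpha\in\mathbb{C}$ and $T\in\Gamma_1$ with $\alpha Tx=y$, and strict transitivity fails already at the pair $(x,y)$.

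For the density, let $\Omega$ be a nonempty open subset of $\mathcal{B}(X)$. Since $\Gamma_{xy}$ is open, $\Omega\cap\Gamma_{xy}$ is open, and since $\Gamma_{xy}$ is dense in $\mathcal{B}(X)$, the set $\Omega\cap\Gamma_{xy}$ is nonempty. As $\Gamma$ is dense in $\mathcal{B}(X)$, it meets the nonempty open set $\Omega\cap\Gamma_{xy}$, so $\Omega\cap\Gamma_1=\Omega\cap(\Gamma\cap\Gamma_{xy})\neq\emptyset$. Thus $\overline{\Gamma_1}=\mathcal{B}(X)$. Equivalently, the same computation shows that $\Gamma_1$ is dense in $\Gamma_{xy}$, so the ``if'' direction of the second assertion of the Theorem applies verbatim and yields that $\Gamma_1$ is a dense nonstrictly transitive subset of $\mathcal{B}(X)$.

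The only delicate point, which I would make explicit, is the use of the \emph{openness} of $\Gamma_{xy}$ in the SOT: an intersection of two dense sets need not be dense, so it is essential that (at least) one of $\Gamma$, $\Gamma_{xy}$ be open. Openness of $\Gamma_{xy}$ holds because $T\mapsto Tx$ is SOT-continuous and the complement of the line $\mathbb{C}y$ is open; this is exactly the fact already invoked in the proof of the Theorem, so no new work is required. I do not anticipate any further obstacle, the remainder being the one-line intersection-of-dense-sets argument above.
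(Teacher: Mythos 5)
Your proposal is correct and coincides with the paper's proof, which simply sets $\Gamma_1=\Gamma\cap\Gamma_{xy}$ for linearly independent $x,y$ and relies on the theorem's facts that $\Gamma_{xy}$ is SOT-dense, open, and not strictly transitive. Your write-up merely makes explicit the density-of-an-intersection argument that the paper leaves implicit.
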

\begin{proof}
For nonzero linearly independent vectors $x,$ $y$ put $\Gamma_1=\Gamma\cap\Gamma_{xy}$.
\end{proof}
\begin{definition}
A set $\Gamma\subset\mathcal{B}(X)$ is said to be supertransitive if $SC(\Gamma)=X\setminus\{0\}.$
An operator $T\in \mathcal{B}(X)$ is supertransitive if 
$$\Gamma=\{T^n\mbox{ : }n\geq0\} $$
is supertransitive.
\end{definition}

It is clear that a supertransitive set is supercyclic. Moreover, the next proposition shows that supertransitivity of sets of operators implies supercyclic transitivity.
\begin{proposition}
Let $\Gamma$ be a subset of $\mathcal{B}(X)$. If $\Gamma$ is supertransitive, then $\Gamma$ is supercyclic transitive. 
\end{proposition}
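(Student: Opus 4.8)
The plan is to reduce everything to the two definitions involved. Supertransitivity says that \emph{every} nonzero vector of $X$ is a supercyclic vector for $\Gamma$, that is, $SC(\Gamma)=X\setminus\{0\}$, whereas supercyclic transitivity asks that for each pair $(U,V)$ of nonempty open subsets of $X$ there be $\alpha\in\mathbb{C}\setminus\{0\}$ and $T\in\Gamma$ with $T(\alpha U)\cap V\neq\emptyset$. So I would fix an arbitrary such pair $(U,V)$ and try to produce the required $\alpha$ and $T$ from the density of a single well-chosen orbit.

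Concretely, I would first select a vector $x\in U$ with $x\neq0$; this is possible since $\dim(X)>1$ forces $X\neq\{0\}$, and in a (Hausdorff) topological vector space the singleton $\{0\}$ is closed and is never open unless $X=\{0\}$, so every nonempty open set contains a nonzero vector. By supertransitivity $x\in SC(\Gamma)$, hence $\mathbb{C}Orb(\Gamma,x)=\{\alpha Tx:\alpha\in\mathbb{C},\,T\in\Gamma\}$ is dense in $X$. Since $x\in U$, every element $\alpha Tx$ of this orbit belongs to $T(\alpha U)$, so it suffices to find one element of $\mathbb{C}Orb(\Gamma,x)$ lying in $V$ and arising from a nonzero scalar.

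The only real obstacle is precisely this matching of the nonzero-scalar requirement: plain density of $\mathbb{C}Orb(\Gamma,x)$ in $V$ could a priori be witnessed by the degenerate element $0=0\cdot Tx$. I would dispose of this by running the density argument against $V\setminus\{0\}$ instead of $V$: because $\{0\}$ is closed this set is open, and it is nonempty since $V$, being open and nonempty, is not reduced to $\{0\}$. Density then yields $\alpha\in\mathbb{C}$ and $T\in\Gamma$ with $0\neq\alpha Tx\in V$, which forces $\alpha\neq0$ and gives $\alpha Tx\in T(\alpha U)\cap V$. As $(U,V)$ was arbitrary, $\Gamma$ is supercyclic transitive. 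Note that, unlike Theorem~\ref{t1}, this argument requires no Baire or second-countability hypothesis on $X$.
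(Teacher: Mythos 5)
Your proof is correct and follows essentially the same route as the paper's: pick a nonzero $x\in U$, use $x\in SC(\Gamma)$ to find $\alpha Tx\in V$, and conclude $\alpha T(U)\cap V\neq\emptyset$. The only difference is that you explicitly rule out the degenerate case $\alpha=0$ by testing density against $V\setminus\{0\}$, a detail the paper's proof passes over in silence.
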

\begin{proof}
Let $U$ and $V$ be two nonempty open subsets of $X$. There exists $x\in X\setminus\{0\}$ such that $x\in U$. Since $\Gamma$ is supertransitive, there exists $\alpha\in\mathbb{C}$ and $T\in\Gamma$ such that $\alpha Tx\in V$. This implies that $\alpha T(U)\cap V\neq\emptyset.$ Hence, $\Gamma$ is supercyclic transitive.
\end{proof}
\begin{proposition}
Let $X$ and $Y$ be topological vector spaces and let $\Gamma\subset \mathcal{B}(X)$ be similar to $\Gamma_1\subset \mathcal{B}(Y)$. Then, $\Gamma$ is supertransitive on $X$ if and only if $\Gamma_1$ is supertransitive on $Y$.
\end{proposition}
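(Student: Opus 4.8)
The plan is to reduce the whole statement to the identity $\phi(SC(\Gamma))=SC(\Gamma_1)$ supplied by the corollary following Proposition~\ref{14}, where $\phi:X\longrightarrow Y$ is the homeomorphism realizing the similarity, together with the elementary observation that the zero vector is never a supercyclic vector for any set of operators: indeed, if $\Lambda\subset\mathcal{B}(X)$ then $\mathbb{C}Orb(\Lambda,0)=\{0\}$, which is not dense. Since $\phi$ is a bijection, the equality $\phi(SC(\Gamma))=SC(\Gamma_1)$ is equivalent to $SC(\Gamma)=\phi^{-1}(SC(\Gamma_1))$, so a single invocation of that corollary already yields control in both directions (recall also that a supertransitive set is supercyclic, so the corollary applies).

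First I would treat the forward implication. Assuming $\Gamma$ is supertransitive, i.e. $SC(\Gamma)=X\setminus\{0\}$, the corollary and the injectivity of $\phi$ give
$$SC(\Gamma_1)=\phi(X\setminus\{0\})=\phi(X)\setminus\{\phi(0)\}=Y\setminus\{\phi(0)\}.$$
On the other hand $0\notin SC(\Gamma_1)$, so $SC(\Gamma_1)\subseteq Y\setminus\{0\}$, hence $Y\setminus\{\phi(0)\}\subseteq Y\setminus\{0\}$; if $\phi(0)$ were nonzero this would force $0\in SC(\Gamma_1)$, a contradiction. Therefore $\phi(0)=0$ and $SC(\Gamma_1)=Y\setminus\{0\}$, i.e. $\Gamma_1$ is supertransitive on $Y$.

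The converse is symmetric: assuming $SC(\Gamma_1)=Y\setminus\{0\}$, the equality $SC(\Gamma)=\phi^{-1}(SC(\Gamma_1))$ and the injectivity of $\phi^{-1}$ give $SC(\Gamma)=X\setminus\{\phi^{-1}(0)\}$, and again $0\notin SC(\Gamma)$ forces $\phi^{-1}(0)=0$, whence $SC(\Gamma)=X\setminus\{0\}$ and $\Gamma$ is supertransitive on $X$.

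I do not expect a genuine obstacle. The only point requiring care — and the reason the statement is not entirely immediate from the corollary — is that $\phi$ is merely a homeomorphism, not assumed linear, so $\phi(0)=0$ is not free. The argument above sidesteps this by extracting $\phi(0)=0$ from the supertransitivity hypothesis itself, using that $0$ is automatically excluded from every set of supercyclic vectors. (If one prefers, the statement could alternatively be phrased under the standing hypothesis $\dim Y>1$ so that $0\notin SC(\Gamma_1)$ is transparent, but this is not needed: the orbit of $0$ is $\{0\}$ and a singleton is not dense in any nontrivial Hausdorff topological vector space, and $Y$ is nontrivial since it is homeomorphic to $X$.)
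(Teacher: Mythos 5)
Your proposal is correct and follows essentially the same route as the paper: both reduce the statement to Proposition~\ref{14} (and its corollary $\phi(SC(\Gamma))=SC(\Gamma_1)$) together with the fact that $\phi$ is a homeomorphism, handling the converse symmetrically via $\phi^{-1}$. The only difference is that you explicitly justify $\phi(0)=0$ (using that $0$ is never a supercyclic vector), a point the paper's one-line ``since $\phi$ is a homeomorphism, the result holds'' silently glosses over.
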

\begin{proof}
Since $\Gamma$ and $\Gamma_1$ are similar, there exists a homeomorphism $\phi$ : $X\longrightarrow Y$ such that for all $T\in\Gamma,$ there exists $S\in\Gamma_1$ satisfying $S\circ\phi=\phi\circ T$. 
If $\Gamma$ is a supertransitive on $X$, then by Proposition \ref{14}, $\phi(SC(\Gamma))\subset SC(\Gamma_1)$. Since $\phi$ is homeomorphism, the result holds.

For the converse, we do the same proof by using $\phi^{-1}$ the invertible operator of $\phi$, and the proof is completed.
\end{proof}

Assume that $X$ is a topological vector space and $\Gamma\subset\mathcal{B}(X)$. The following result shows that the SOT-closure of $\Gamma$ is not large enough to have more supercyclic vectors than $\Gamma$.
\begin{proposition}
Let $X$ be a topological vector space and $\Gamma\subset\mathcal{B}(X)$. If $\overline{\Gamma}$ stands for the SOT-closure of $\Gamma$ then 
$$SC(\Gamma)=SC(\overline{\Gamma}).$$
\end{proposition}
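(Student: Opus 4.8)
The plan is to prove the two inclusions separately, the non-trivial one resting on the SOT-continuity of the evaluation map $T\mapsto Tx$ for a fixed vector $x$. The inclusion $SC(\Gamma)\subseteq SC(\overline{\Gamma})$ I would dispatch at once: since $\Gamma\subseteq\overline{\Gamma}$, one has $\mathbb{C}\,Orb(\Gamma,x)\subseteq\mathbb{C}\,Orb(\overline{\Gamma},x)$ for every $x\in X$, so density of the smaller set already forces density of the larger, and every supercyclic vector for $\Gamma$ is one for $\overline{\Gamma}$.

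For the reverse inclusion $SC(\overline{\Gamma})\subseteq SC(\Gamma)$ I would fix $x\in SC(\overline{\Gamma})$ and note that $x\neq0$ (a zero vector is never supercyclic since $\dim(X)>1$), so $\{x\}$ is a linearly independent set. Then the evaluation map $e_x:\mathcal{B}(X)\longrightarrow X$, $e_x(T)=Tx$, is SOT-continuous: for a neighbourhood $U$ of $0$ in $X$, the basic SOT-neighbourhood $\{S\in\mathcal{B}(X):Sx-Tx\in U\}$ of $T$ is precisely $e_x^{-1}(Tx+U)$. Continuity then gives $e_x(\overline{\Gamma})\subseteq\overline{e_x(\Gamma)}$, that is, $Orb(\overline{\Gamma},x)\subseteq\overline{Orb(\Gamma,x)}$.

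Next I would pass to the scaled orbits. Since the scalar action on a topological vector space is (jointly) continuous, for every subset $A\subseteq X$ one has $\mathbb{C}\,\overline{A}\subseteq\overline{\mathbb{C}A}$. Applying this with $A=Orb(\Gamma,x)$ and combining it with the previous step yields
$$\mathbb{C}\,Orb(\overline{\Gamma},x)\subseteq\mathbb{C}\,\overline{Orb(\Gamma,x)}\subseteq\overline{\mathbb{C}\,Orb(\Gamma,x)}.$$
Taking closures, $X=\overline{\mathbb{C}\,Orb(\overline{\Gamma},x)}\subseteq\overline{\mathbb{C}\,Orb(\Gamma,x)}$, so $x\in SC(\Gamma)$, which closes the argument.

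I expect the only genuinely delicate point — and even it is mild — to be recording the SOT-continuity of $T\mapsto Tx$ from the neighbourhood-basis description of SOT; once that is in hand, the rest is just the standard topological facts that continuous maps carry closures into closures of images and that closures interact well with the continuous scalar multiplication on $X$.
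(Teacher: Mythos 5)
Your proof is correct and follows essentially the same route as the paper: the paper likewise dispatches $SC(\Gamma)\subseteq SC(\overline{\Gamma})$ as trivial and proves the reverse inclusion by observing that $\Omega=\{S\in\mathcal{B}(X):\alpha Sx\in U\}$ is an SOT-neighbourhood of $T$ and hence meets $\Gamma$, which is precisely the SOT-continuity of the evaluation map $T\mapsto Tx$ that you isolate. Your closure-theoretic packaging (via $f(\overline{A})\subseteq\overline{f(A)}$ and $\mathbb{C}\overline{A}\subseteq\overline{\mathbb{C}A}$) is only a reformulation of the paper's open-set argument, not a different method.
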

\begin{proof}
We only need to prove that $SC(\overline{\Gamma})\subset SC(\Gamma)$. Fix $x\in SC(\overline{\Gamma})$ and let $U$ be an arbitrary open subset of $X$. Then there is some $\alpha\in\mathbb{C}$ and $T\in\overline{\Gamma}$ such that $\alpha Tx \in U$. The set $\Omega = \{S\in\mathcal{B}(X) \mbox{ : } \alpha Sx \in U\}$ is a SOT-neighborhood
of T and so it must intersect $\Gamma$. Therefore, there is some $S\in \Gamma$ such that $\alpha Sx \in U$ and this shows that
$x\in SC(\Gamma)$.
\end{proof}
\begin{corollary}
 Let X be a topological vector space and $\Gamma\subset\mathcal{B}(X)$. Then $\Gamma$ is supertransitive if and only if $\overline{\Gamma}$ is supertransitive.
\end{corollary}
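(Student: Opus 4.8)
The plan is to derive this immediately from the preceding proposition, which asserts $SC(\Gamma)=SC(\overline{\Gamma})$ for any $\Gamma\subset\mathcal{B}(X)$, together with the definition of supertransitivity. Recall that a set $\Gamma$ is supertransitive precisely when $SC(\Gamma)=X\setminus\{0\}$; the same criterion applied to $\overline{\Gamma}$ says that $\overline{\Gamma}$ is supertransitive exactly when $SC(\overline{\Gamma})=X\setminus\{0\}$.

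First I would invoke the proposition to record the key identity $SC(\Gamma)=SC(\overline{\Gamma})$. Then, assuming $\Gamma$ is supertransitive, I have $SC(\overline{\Gamma})=SC(\Gamma)=X\setminus\{0\}$, so $\overline{\Gamma}$ is supertransitive. Conversely, assuming $\overline{\Gamma}$ is supertransitive, the same identity gives $SC(\Gamma)=SC(\overline{\Gamma})=X\setminus\{0\}$, so $\Gamma$ is supertransitive. That closes the equivalence.

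There is essentially no obstacle here: the corollary is a one-line consequence of the previous proposition, the only content being to unwind the definition of supertransitivity on both sides of the equivalence. The proof I would write is:

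\begin{proof}
By the previous proposition, $SC(\Gamma)=SC(\overline{\Gamma})$. Hence $SC(\Gamma)=X\setminus\{0\}$ if and only if $SC(\overline{\Gamma})=X\setminus\{0\}$, which is exactly the assertion that $\Gamma$ is supertransitive if and only if $\overline{\Gamma}$ is supertransitive.
\end{proof}
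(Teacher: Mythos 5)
Your proof is correct and is exactly the intended argument: the paper states this corollary without proof, as an immediate consequence of the preceding proposition $SC(\Gamma)=SC(\overline{\Gamma})$ combined with the definition of supertransitivity, which is precisely what you wrote.
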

In the next definition, we introduce the supercyclic criterion for a sets of operators.
\begin{definition}\label{cc}
Let $\Gamma$ be a subset of $\mathcal{B}(X)$. We say that $\Gamma$ satisfies the criterion of supercyclicity if there exist two dense subsets $X_0$ and $Y_0$ in $X$, a sequence $\{k\}$ of positives integers, a sequence $\{\alpha_k\}$ of nonzero complex numbers, a sequence $\{T_k\}$ of $\Gamma$ and a sequence of maps $S_k$ : $Y_0\longrightarrow X$ such that$:$
\begin{itemize}
\item[$(i)$] $\alpha_k T_kx\longrightarrow 0$ for all $x\in X_0$;
\item[$(ii)$] $\alpha_k^{-1} S_kx\longrightarrow 0$ for all $y\in Y_0$;
\item[$(iii)$] $T_kS_ky\longrightarrow y$ for all $y\in Y_0$.
\end{itemize}
\end{definition}
\begin{remark}
Let $T\in\mathcal{B}(X)$. Then $T$ satisfies the criterion of supercyclicity as an operator if and only if the set
$$\Gamma=\{T^n \mbox{ : }n\in\geq0\}$$
satisfies the criterion of supercyclicity as a set of operators. 
\end{remark}
\begin{theorem}\label{11}
Let $X$ be a second countable Baire topological vector space and $\Gamma$ a subset of $\mathcal{B}(X).$ If $\Gamma$ satisfies the criterion of supercyclicity, then $SC(\Gamma)$ is a dense subset of $X.$ As consequence$;$ $\Gamma$ is supercyclic.
\end{theorem}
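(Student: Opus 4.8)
The plan is to reduce the statement to Theorem \ref{t1}. Since $X$ is a second countable Baire topological vector space, that theorem tells us that $SC(\Gamma)$ is dense in $X$ as soon as $\Gamma$ is supercyclic transitive, and it also records that a supercyclic transitive set is supercyclic. So the whole proof amounts to verifying that the supercyclicity criterion forces $\Gamma$ to be supercyclic transitive; everything else is then quoted from Theorem \ref{t1}.

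To check supercyclic transitivity, fix nonempty open subsets $U$ and $V$ of $X$. Using density of $X_0$ and $Y_0$, choose $x\in X_0\cap U$ and $y\in Y_0\cap V$. The idea is the usual ``diagonal'' trick: for each index $k$ in the sequence provided by Definition \ref{cc}, put
$$z_k:=x+\alpha_k^{-1}S_k y.$$
By condition $(ii)$ one has $\alpha_k^{-1}S_k y\to 0$, hence $z_k\to x$, so $z_k\in U$ for all large $k$ since $U$ is a neighbourhood of $x$. On the other hand, using linearity of $T_k$,
$$T_k(\alpha_k z_k)=\alpha_k T_k z_k=\alpha_k T_k x+T_k S_k y,$$
and by $(i)$ the first term tends to $0$ while by $(iii)$ the second tends to $y$; hence $T_k(\alpha_k z_k)\to y$, so $T_k(\alpha_k z_k)\in V$ for all large $k$ since $V$ is a neighbourhood of $y$.

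Picking any $k$ large enough that both conditions hold, we have $z_k\in U$, and since $\alpha_k\neq 0$ this gives $\alpha_k z_k\in\alpha_k U$ and therefore $T_k(\alpha_k z_k)\in T_k(\alpha_k U)\cap V$. Thus $T_k(\alpha_k U)\cap V\neq\emptyset$ with $T_k\in\Gamma$ and $\alpha_k\in\mathbb{C}\setminus\{0\}$, which is precisely the definition of supercyclic transitivity. Invoking Theorem \ref{t1}, $SC(\Gamma)$ is dense in $X$ and, in particular, $\Gamma$ is supercyclic.

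I do not expect a real obstacle here. The only points needing a little care are that the three convergences in $(i)$--$(iii)$ run along the \emph{same} index set $\{k\}$, so that the single vector $z_k$ simultaneously controls the behaviour on $U$ and on $V$, and that $T_k$ is linear so the scalar $\alpha_k$ may be moved through $T_k$. (One should also read condition $(ii)$ of Definition \ref{cc} with the variable $y\in Y_0$, matching $(iii)$, which is how it is used above.)
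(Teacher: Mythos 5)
Your proposal is correct and follows essentially the same route as the paper: both construct $z_k=x+\alpha_k^{-1}S_k y$ with $x\in X_0\cap U$, $y\in Y_0\cap V$, use conditions $(i)$--$(iii)$ to get $z_k\to x$ and $\alpha_k T_k z_k\to y$, conclude supercyclic transitivity, and then invoke Theorem \ref{t1}. Your closing remark about reading condition $(ii)$ with the variable $y\in Y_0$ correctly identifies a typo in Definition \ref{cc} that the paper's own proof also silently corrects.
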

\begin{proof}
Let $U$ and $V$ be two nonempty open subsets of $X$. Since $X_0$ and $Y_0$ are dense in $X$, there exist $x_0$ and $y_0$ in $X$ such that 
$$ x_0\in X_0\cap U\hspace{0.3cm}\mbox{ and }\hspace{0.3cm} y_0\in Y_0\cap V.$$
For all $k\geq1$, let $z_k=x_0+\alpha_k^{-1} S_ky $. By Definition \ref{cc}, we have 
$\alpha_k^{-1} S_k y\longrightarrow 0$ which implies that $z_k\longrightarrow x_0$. Since $x_0\in U$
and $U$ is open, there exists $N_1\in\mathbb{N}$ such that $z_k\in U$, for all $k\geq N_1$. On the other hand, $\alpha_k T_k z_k=\alpha_k T_k x_0+T_k (S_k y_0)\longrightarrow y_0$. Since $y_0\in V$ and $V$ is open, there exists $N_2\in\mathbb{N}$ such that $\alpha_k T_k z_k\in V$, for all $k\geq N_2$. Let $N=$max$\{N_1,N_2\}$, then $z_k\in U$ and $\alpha_k T_k z_k\in V$, for all $k\geq N$. From this, we deduce that 
$$\alpha_k T_k (U)\cap V\neq \emptyset.$$
for all $k\geq N$.
Hence, $\Gamma$ is a supercyclic transitive set. By Theorem \ref{t1}, we deduce that $SC(\Gamma)$ is a dense subset of $X$.
\end{proof}
\begin{remark}
If $X$ is a complex normed space and $\Gamma$ is a subset of $\mathcal{B}(X)$ which satisfies the criterion of supercyclicity, then by Theorem \ref{11}, $\Gamma$ is a supercyclic transitive set. Thus, $\Gamma$ satisfies the conditions $(ii)$ and $(iii)$ of Theorem \ref{tt}.
\end{remark}
\section{Supercyclic $C$-Regularized Groups}

In this section, we study the particular case where $\Gamma$ stands for a $C$-regularized group. 
Recall \cite{CKM}, that an entire $C$-regularized group is an operator family $(S(z))_{z\in\mathbb{C}}$ on $\mathcal{B}(X)$ that satisfies:
\begin{itemize}
\item[$(1)$] $S(0)=C;$
\item[$(2)$] $S(z+w)C = S(z)S(w)$ for every $z,$ $w\in\mathbb{C}$,
\item[$(3)$] The mapping $z \mapsto S(z)x$, with $z\in\mathbb{C}$, is entire for every $x \in X$.
\end{itemize}
\begin{lemma}\label{lem}
Let $(S(z))_{z\in\mathbb{C}}$ be a supercyclic $C$-regularized group. If $C$ has dense range, then $Cx\in SC((S(z))_{z\in\mathbb{C}})$, for all $x\in SC((S(z))_{z\in\mathbb{C}}).$
\end{lemma}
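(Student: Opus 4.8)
The key observation is that $C$ belongs to the commutant $\{(S(z))_{z\in\mathbb{C}}\}'$ of the family, so this lemma should follow by an argument parallel to that of Proposition \ref{p1}. Indeed, from property $(2)$ of an entire $C$-regularized group we have $S(z+0)C = S(z)S(0)$, i.e. $S(z)C = S(z)S(0) = S(0)S(z) = CS(z)$ for every $z\in\mathbb{C}$ (using $S(0)=C$ and the fact that $S(z+0)=S(0+z)$). Hence $C$ commutes with each $S(z)$.

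The plan is then the following. Fix $x\in SC((S(z))_{z\in\mathbb{C}})$ and let $O$ be an arbitrary nonempty open subset of $X$. Since $C$ is continuous with dense range, $C^{-1}(O)$ is a nonempty open subset of $X$. Because $x$ is a supercyclic vector for the family, there exist $\alpha\in\mathbb{C}$ and $z\in\mathbb{C}$ with $\alpha S(z)x\in C^{-1}(O)$, that is, $\alpha C(S(z)x)\in O$. Using $CS(z)=S(z)C$, we get
$$\alpha S(z)(Cx) = \alpha C(S(z)x)\in O.$$
Thus $\mathbb{C}\,Orb((S(z))_{z\in\mathbb{C}},Cx)$ meets every nonempty open subset of $X$, so it is dense, i.e. $Cx\in SC((S(z))_{z\in\mathbb{C}})$.

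Strictly speaking one could phrase this as an immediate corollary of Proposition \ref{p1} by taking $\Gamma = \{S(z):z\in\mathbb{C}\}$ and $T=C$, once one checks that $C\in\{\Gamma\}'$ and that $C$ has dense range (the latter being the hypothesis). The only point requiring a word of care — and the mild ``obstacle'' here — is the verification that $C$ lies in the commutant; this is where property $(2)$ together with $S(0)=C$ is used, and it is worth spelling out since $C$ itself is not of the form $S(z)$ a priori in the sense of being indexed, though in fact $C=S(0)$. No Baire or second-countability hypotheses are needed, as the argument is purely topological.
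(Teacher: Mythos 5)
Your proof is correct and follows the same route as the paper, which simply observes that $C\in\{(S(z))_{z\in\mathbb{C}}\}'$ and invokes Proposition \ref{p1}; your verification that $C$ commutes with each $S(z)$ via $S(z)S(0)=S(z+0)C=S(0+z)C=S(0)S(z)$ just makes explicit the step the paper leaves to the reader.
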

\begin{proof}
This, since $C\in \{(S(z))_{z\in\mathbb{C}}\}^{'}$.
\end{proof}

By Theorem \ref{t1}, every supercyclic transitive $C$-regularized group is supercyclic. In the following we prove that the converse is holds. 
\begin{theorem}
Let $(S(z)_{z\in\mathbb{C}})$ be a $C$-regularized group such that $C$ has dense range. If $(S(z)_{z\in\mathbb{C}})$  is supercyclic, then $(S(z)_{z\in\mathbb{C}})$ is supercyclic transitive.
\end{theorem}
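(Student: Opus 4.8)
The plan is to show, using only the $C$-regularized group axioms, that every operator $S(w)$ commutes with the whole family and has dense range; then Proposition \ref{p1} will force $SC((S(z))_{z\in\mathbb{C}})$ to be dense, and supercyclic transitivity will follow directly from that density. Write $\Gamma=\{S(z)\mbox{ : }z\in\mathbb{C}\}$. From axiom (2) we get $S(z)S(w)=S(z+w)C=S(w+z)C=S(w)S(z)$ for all $z,w\in\mathbb{C}$, so $S(w)\in\{\Gamma\}^{'}$ for every $w$. Taking $w=-z$ in (2) and using (1) gives $S(z)S(-z)=S(0)C=C^{2}$. Since $C$ is continuous with dense range, $C^{2}$ has dense range as well: for any nonempty open $O$ the set $C^{-1}(O)$ is open and nonempty, hence meets $\mathrm{ran}(C)$, so $C^{2}$ meets $O$. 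As $\mathrm{ran}(C^{2})=\mathrm{ran}(S(z)S(-z))\subset\mathrm{ran}(S(z))$, each $S(z)$ has dense range.

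Next I would invoke Proposition \ref{p1}: since $S(w)\in\{\Gamma\}^{'}$ and $S(w)$ has dense range, $S(w)x\in SC(\Gamma)$ whenever $x\in SC(\Gamma)$. Fix $x_{0}\in SC(\Gamma)$, which exists because $\Gamma$ is supercyclic. By the preceding line together with the corollary of Proposition \ref{p1}, every vector $\alpha S(z)x_{0}$ with $\alpha\in\mathbb{C}\setminus\{0\}$ and $z\in\mathbb{C}$ lies in $SC(\Gamma)$. The collection of all such vectors is dense in $X$: indeed $\mathbb{C}Orb(\Gamma,x_{0})$ is dense, and deleting the single point $0$ does not destroy density, because $S(z_{*})x_{0}\neq 0$ for some $z_{*}$ (otherwise $\mathbb{C}Orb(\Gamma,x_{0})=\{0\}$, impossible since $\dim X>1$), so that $\tfrac1n S(z_{*})x_{0}\to 0$ exhibits $0$ as a limit of nonzero orbit points. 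Hence $SC(\Gamma)$ is dense in $X$.

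Finally I would prove transitivity by hand. Let $U,V$ be nonempty open subsets of $X$. By density of $SC(\Gamma)$, pick $x'\in SC(\Gamma)\cap U$. Then $\mathbb{C}Orb(\Gamma,x')$ is dense and, by the same argument as above, remains dense after removing $0$, so there are $\alpha\in\mathbb{C}\setminus\{0\}$ and $w\in\mathbb{C}$ with $\alpha S(w)x'\in V$. Then $\alpha x'\in\alpha U$ and $S(w)(\alpha x')=\alpha S(w)x'\in V$, whence $S(w)(\alpha U)\cap V\neq\emptyset$; thus $\Gamma$ is supercyclic transitive. If one is willing to assume in addition that $X$ is second countable and Baire, this last paragraph can be replaced by a citation of Theorem \ref{t1}. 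The only points requiring a little care are the verification that $C^{2}$, hence each $S(z)$, has dense range, and keeping the scalar $\alpha$ nonzero as demanded by the definition of supercyclic transitivity; the rest is routine bookkeeping with axioms (1)--(3).
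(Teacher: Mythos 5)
Your proof is correct, but it takes a genuinely different route from the paper's. The paper argues directly on a single orbit: given nonempty open sets $U$ and $V$, it picks $\alpha S(z_1)x\in C^{-1}(U)$ and $\beta S(z_2)x\in V$ for a supercyclic vector $x$, sets $z_3=z_1-z_2$, and uses the group law $S(z_3)S(z_2)=CS(z_1)$ to see that the single vector $S(z_3)(S(z_2)x)$ witnesses that a suitable nonzero scalar multiple of $S(z_3)(V)$ meets $U$ --- exactly the factorization trick of Theorem \ref{so}, with $C^{-1}(U)$ absorbing the regularizing operator. You instead first show that every $S(w)$ lies in $\{\Gamma\}^{'}$ and has dense range (via $S(w)S(-w)=C^{2}$ and the density of the range of $C$), feed this into Proposition \ref{p1} and its corollary to conclude that the punctured orbit $\mathbb{C}Orb(\Gamma,x_0)\setminus\{0\}$ of a supercyclic vector $x_0$ consists of supercyclic vectors, deduce that $SC(\Gamma)$ is dense, and then obtain transitivity by hand from that density. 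Your route is longer but yields strictly stronger intermediate conclusions (density of $SC(\Gamma)$ and dense range of each $S(z)$, both of independent interest), works without the Baire and second-countability hypotheses that a citation of Theorem \ref{t1} would require, and is more careful than the paper on the requirement that the scalar in the definition of supercyclic transitivity be nonzero --- a detail the paper's argument glosses over when $0$ happens to lie in $C^{-1}(U)$ or in $V$, and which you handle by checking that deleting $0$ from a dense orbit preserves density. Both arguments use the dense range of $C$ in an essential way: the paper to guarantee that $C^{-1}(U)$ is nonempty, you to propagate dense range to every $S(z)$.
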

\begin{proof}
Let $x\in SC((S(z))_{z\in\mathbb{C}})$. If $U$ and $V$ are two nonempty open subsets of $X$, then there exist $\alpha,$ $\beta\in\mathbb{C}$ and $z_1$, $z_2\in\mathbb{C}$  such that
\begin{equation}\label{e11}
\alpha S(z_1)x\in C^{-1}(U) \hspace{0.6cm}\mbox{ and }\hspace{0.6cm}\beta S(z_2)x\in V. 
\end{equation}
Let $z_3=z_1-z_2$. By $\ref{e11}$, we have
$$\alpha S(z_3)(S(z_2)x)\in U\hspace{0.3cm}\mbox{ and }\hspace{0.3cm}\beta S(z_3)(S(z_2)x)\in  S(z_3)(V),$$
which implies that $U\cap\frac{\beta}{\alpha} S(z_3)(V)\neq \emptyset$. 
Hence, $(S(z))_{z\in\mathbb{C}}$ is a supercyclic transitive $C$-regularized group.
\end{proof}
\begin{theorem}
Let $(S(z)_{z\in\mathbb{C}})$ be a supercyclic $C$-regularized group on a Banach infinite dimensional space $X$. Assume that $C$ is of dense range. If $x \in X$ is a supercyclic vector of $(S(z)_{z\in\mathbb{C}})$, then the following assertions hold:
\begin{itemize}\label{l1}
\item[$(1)$] $S(z) x\neq 0$ for all $z\in\mathbb{C}$;
\item[$(2)$] The set $\{\alpha S(z) x \mbox{ : }\alpha\mbox{, }z\in\mathbb{C}\mbox{, }\vert z\vert>\vert \omega_0\vert\}$ is dense in $X$ for all $\omega_0\in\mathbb{C}$.
\end{itemize}
\end{theorem}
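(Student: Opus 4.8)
The plan is to treat the two assertions separately; both rest on the functional equation $S(z+w)C=S(z)S(w)$ and on the fact that $C$ is a nonzero operator, since it has dense range in $X\neq\{0\}$. Note that putting $z=0$ in the functional equation gives $S(w)C=S(0)S(w)=CS(w)$, so $C$ commutes with every $S(w)$.

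For $(1)$ I argue by contradiction. Suppose $S(z_0)x=0$ for some $z_0\in\mathbb{C}$. Then for every $w\in\mathbb{C}$,
$$S(w+z_0)Cx=S(w)S(z_0)x=0,$$
and since $w+z_0$ ranges over all of $\mathbb{C}$ this says $S(v)Cx=0$ for every $v\in\mathbb{C}$. Hence $C(\alpha S(z)x)=\alpha\,CS(z)x=\alpha\,S(z)Cx=0$ for all $\alpha,z\in\mathbb{C}$, so the continuous operator $C$ vanishes on the dense set $\mathbb{C}Orb((S(z))_{z\in\mathbb{C}},x)$; therefore $C\equiv 0$, which contradicts that $C$ has dense range in $X\neq\{0\}$. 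I expect no difficulty with this part.

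For $(2)$, fix $\omega_0\in\mathbb{C}$, write $r=|\omega_0|$, and set $K=\{S(z)x:|z|\le r\}$ and $B=\{\alpha S(z)x:\alpha\in\mathbb{C},\ |z|>r\}$, so that $B$ is the set in the statement and $\mathbb{C}Orb((S(z))_{z\in\mathbb{C}},x)=\mathbb{C}K\cup B$. Since $z\mapsto S(z)x$ is entire, hence continuous, $K$ is compact, and by part $(1)$ we have $0\notin K$. I would first show that $\mathbb{C}K$ is \emph{closed}: if $\lambda_n S(z_n)x\to y$ with $|z_n|\le r$, then after passing to a subsequence $z_n\to z_*$ with $|z_*|\le r$, so $S(z_n)x\to S(z_*)x\neq 0$; hence $(\lambda_n)$ is bounded, a further subsequence satisfies $\lambda_n\to\lambda_*$, and $y=\lambda_*S(z_*)x\in\mathbb{C}K$. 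Next, $\mathbb{C}K=\bigcup_{n\ge 1}\{\lambda v:v\in K,\ |\lambda|\le n\}$ is a countable union of compact sets, each of which has empty interior because $X$ is infinite dimensional; thus $\mathbb{C}K$ is meager, and being closed it has empty interior. Consequently $X\setminus\mathbb{C}K$ is dense. Finally, since $\mathbb{C}K\cup B$ is dense and $\mathbb{C}K$ is closed we have $\mathbb{C}K\cup\overline{B}=X$, so $X\setminus\mathbb{C}K\subseteq\overline{B}$, and taking closures gives $\overline{B}=X$, which is $(2)$.

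The one delicate point is the claim in $(2)$ that $\overline{\mathbb{C}K}$ has empty interior: $\sigma$-compactness alone would not suffice (a $\sigma$-compact set can be dense), and the argument genuinely needs $0\notin K$ — so that $\mathbb{C}K$ is actually closed — which is precisely where part $(1)$ feeds back into part $(2)$, along with infinite-dimensionality and completeness of $X$ for the Baire and Riesz arguments. Everything else is routine manipulation of the $C$-regularized group identities together with elementary point-set topology.
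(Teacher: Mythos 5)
Your proof is correct and follows essentially the same route as the paper's: part $(1)$ propagates the hypothetical zero through the group identity applied to $Cx$ (the paper extracts the contradiction from $Cx$ being a supercyclic vector, you from $C$ annihilating a dense set), and part $(2)$ splits the projective orbit at $|z|=|\omega_0|$ and uses part $(1)$, compactness of the closed disk, and Riesz's theorem on compact sets in infinite-dimensional spaces. The only difference in part $(2)$ is packaging: the paper argues by contradiction, trapping a bounded open set inside the single compact set $\{\alpha S(z)x : |z|\le|\omega_0|,\ |\alpha|\le M/m_1\}$, whereas you show directly that $\mathbb{C}K$ is closed and meager, hence nowhere dense; both arguments hinge on the same lower bound $\inf_{|z|\le|\omega_0|}\Vert S(z)x\Vert>0$ supplied by part $(1)$.
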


\begin{proof}
$(1)$ If $z_1\in\mathbb{C}$ is such that $S(z_1)x=0$, then $S(z_1)(Cx)=0$. Let $z_2\in \mathbb{C}$. Then
$$ S(z_2)(Cx)=S(z_2-z_1+z_1)(Cx)=S(z_2-z_1)(S(z_1)x)=0. $$
This is a contradiction, since by Lemma \ref{lem} $Cx\in SC(S(z))_{z\in\mathbb{C}}).$\\
$(2)$ Suppose that there exists $\omega_0\in\mathbb{C}$ such that $A:=\{\alpha S(z)x \mbox{ : }\alpha\mbox{, }z\in\mathbb{C}\mbox{, }\vert z \vert>\vert \omega_0 \vert\}$ is not dense in X. Hence there exists a bounded open set $U$ such that $U \cap \overline{A}= \emptyset$.
Therefore we have
$$ U\subset \overline{\{\alpha S(z)x \mbox{ : }\alpha\mbox{, }z\in\mathbb{C}\mbox{, }\vert z \vert\leq\vert \omega_0 \vert\}} $$
by using the relation 
$$ X=\overline{\{\alpha S(z)x \mbox{ : }\alpha\mbox{, }z\in\mathbb{C}\}}=\overline{\{\alpha S(z)x \mbox{ : }\alpha\mbox{, }z\in\mathbb{C}\mbox{, }\vert z \vert>\vert \omega_0 \vert\}}\cup \overline{\{\alpha S(z)x \mbox{ : }\alpha\mbox{, }z\in\mathbb{C}\mbox{, }\vert z \vert\leq\vert \omega_0 \vert\}}. $$
Since $S(z)x$ is continuous with $z$ and $S(z)x \neq0$  holds for all $z\in\mathbb{C}$ by $(1)$, there exist $m_1$, $m_2 >0$ such that $0 < m_1 \leq\Vert S(z)x\Vert < m_2$ for $z\in\mathbb{C}$ with $\vert z\vert\leq\vert \omega_0\vert$.
There exists $M > 0$ such that $\Vert y\Vert \leq M$ for any $y \in U$ because $U$ is bounded. So we have $$U \subset \overline{\left\lbrace \alpha S(z) x \mbox{ : }\vert z\vert\leq\vert \omega_0\vert \mbox{, }\vert \alpha \vert\leq \frac{M}{m_1} \right\rbrace  },$$
 which means that $\overline{U}$ is compact. Hence $X$ is finite dimensional, which contradicts that $X$ is infinite dimensional.
\end{proof}
\section{Supercyclic Strongly Continuous Semigroups}
 
Recall that a one-parameter family $(T_t)_{t\geq0}$ of operators on $X$ is called a strongly continuous semigroup of operators if the following three conditions are satisfied$:$
\begin{itemize}
\item[$(i)$] $T_0=I$ the identity operator on $X$;
\item[$(ii)$] $T_{t+s}=T_{t}T_{s}$ for all $t,$ $s\geq0$;
\item[$(iii)$] $\lim_{t\rightarrow s}T_{t}x=T_{s}x$ for all $x\in X$ and $t\geq 0$.
\end{itemize}
One also refers to it as a $C_0$-semigroup.

The linear operator defined in 
$$ D(A)=  \left\lbrace x\in X\mbox{ : }\lim_{t\downarrow0}\frac{T_tx-x}{t}\mbox{ exists }\right\rbrace   $$
by
$$ Ax=\lim_{t\downarrow0}\frac{T_tx-x}{t}=\frac{d^{+}T_tx}{dt}\vert_{t=0}\mbox{ for }x\in D(A) $$
is the infinitesimal generator of the $C_0$-semigroup $(T_t)_{t\geq0}$ and $D(A)$ is the domain of $A$.
For more informations about the theory of $C_0$ semigroups we refer to books by Engel and Nagel \cite{Nagel,Engel}, by Pazy \cite{Pazy} and by Dunford  and Schwartz \cite{Dunford Schwartz}.

As a consequence of Theorem \ref{so}, we may prove that the supercyclicity and the supercyclic transitivity are two notions equivalent in the case of strongly continuous semigroups of operators.
\begin{theorem}\label{t4}
Let $\Gamma=(T_t)_{t\geq0}$ be a strongly continuous semigroup of operators. Then, the following assertions are equivalent$:$
\begin{itemize}
\item[$(i)$]$\Gamma$ is supercyclic;
\item[$(ii)$]$\Gamma$ is supercyclic transitive.
\end{itemize}
\end{theorem}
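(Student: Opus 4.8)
The plan is to prove the two implications separately; the content is entirely in $(i)\Rightarrow(ii)$, since $(ii)\Rightarrow(i)$ is a direct application of Theorem~\ref{t1} (a supercyclic transitive set is supercyclic). For $(i)\Rightarrow(ii)$ I would mimic the proof of Theorem~\ref{so}, with $\Gamma=(T_t)_{t\ge0}$ playing the role of the multiplicatively closed family. Fix a supercyclic vector $x$. Two preliminary remarks: $X$, being a nonzero topological vector space, has no isolated point, so $\mathbb{C}Orb(\Gamma,x)$ stays dense after deleting the single point $0$, and consequently for every nonempty open $W$ one can find $\alpha\ne0$ and $s\ge0$ with $\alpha T_sx\in W$. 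Moreover $T_0=I\in\Gamma$, and $T_t=T_{t-s}T_s$ with $T_{t-s}\in\Gamma$ whenever $t\ge s$. The one place where the semigroup behaves differently from a general multiplicatively closed set is that the factorization $T_t=AT_s$ is available only when $t\ge s$, so the whole point is to force the time parameters out in the right order.

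To do this I would first record that every $T_t$ has dense range. Since the semigroup is abelian, $T_t\in\{\Gamma\}'$, so Proposition~\ref{p1} yields $T_tx\in SC(\Gamma)$; equivalently, the ``tail'' $\mathbb{C}Orb(\Gamma,T_tx)=\{\alpha T_sx:\alpha\in\mathbb{C},\ s\ge t\}$ is dense for every $t\ge0$. Granting this, let $U,V$ be nonempty open. Using supercyclicity of $x$, pick $\alpha\ne0$ and $t\ge0$ with $\alpha T_tx\in U$; then, using density of the tail from time $t$, pick $\beta\ne0$ and $s\ge t$ with $\beta T_sx\in V$. From $T_tx\in\tfrac1\alpha U$ we get $T_sx=T_{s-t}(T_tx)\in\tfrac1\alpha T_{s-t}(U)$, while $\beta T_sx\in V$ gives $T_sx\in\tfrac1\beta V$; intersecting these and multiplying through by $\beta$ gives $T_{s-t}\!\left(\tfrac\beta\alpha U\right)\cap V\ne\emptyset$, with $\tfrac\beta\alpha\ne0$ and $T_{s-t}\in\Gamma$ (the case $s=t$ being covered by $T_0=I$). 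As $U,V$ were arbitrary, $\Gamma$ is supercyclic transitive.

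The main obstacle is thus the dense-range claim for the $T_t$'s. I would get it from the nesting $T_sX=T_t(T_{s-t}X)\subseteq T_tX$ for $s\ge t$ together with the continuity of $s\mapsto T_sx$: the ``head'' $\{T_sx:0\le s\le t\}$ is compact, so its $\mathbb{C}$-cone is $\sigma$-compact and hence meager when $\dim X=\infty$; since $\mathbb{C}Orb(\Gamma,x)$ is the union of this cone and the tail $\{\alpha T_sx:s\ge t\}\subseteq T_tX$, Baire's theorem forces the tail to be dense, so $\overline{T_tX}=X$. (When $\dim X<\infty$ the claim is immediate, since then $T_t=e^{tA}$ is invertible.) A shortcut staying closer to Theorem~\ref{so} would keep both cases $t\ge s$ and $t<s$ and appeal to the symmetry $U\leftrightarrow V$ in the definition of supercyclic transitivity, but checking that this really covers every ordered pair $(U,V)$ is delicate, so I would favour the route through dense range.
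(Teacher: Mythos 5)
Your overall route is genuinely different from the paper's: the paper disposes of $(i)\Rightarrow(ii)$ in one line by factoring $T_{t_1}=T_{t_1-t_2}T_{t_2}$ for $t_1>t_2$ and invoking Theorem \ref{so}, whereas you first establish that each $T_t$ has dense range, deduce via Proposition \ref{p1} that the tail $\{\alpha T_sx:\alpha\in\mathbb{C},\,s\ge t\}$ is dense, and then choose the two time parameters in the right order. Your final computation $T_{s-t}\bigl(\tfrac{\beta}{\alpha}U\bigr)\cap V\neq\emptyset$ is correct, and your worry about the ``symmetry shortcut'' is legitimate: for a fixed pair $(U,V)$ the one-sided factorization only yields the transitivity condition for $(V,U)$ when the times come out in the wrong order, and nothing guarantees the right order can always be arranged without precisely the tail-density fact you set out to prove. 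So the extra work is not optional, and your plan attacks the right point.

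The gap is in the one step everything hinges on, the dense-range claim. From ``the cone over the compact head $K=\{T_sx:0\le s\le t\}$ is $\sigma$-compact, hence meager'' you conclude by Baire that the tail must be dense. That inference is invalid: if the tail were not dense you would only obtain a nonempty open set contained in the \emph{closure} of $\mathbb{C}K$, and the closure of a meager set need not be meager --- a meager set can be dense. Concretely, $\overline{\mathbb{C}K}$ can equal $X$ for a compact $K$: take $K=\{2^{-n}x_n:n\ge1\}\cup\{0\}$ with $(x_n)$ dense in the unit sphere; then $\mathbb{C}K$ contains the whole line through each $x_n$ and is dense. The obstruction is exactly the possibility that $0\in\overline{K}$, i.e.\ that $\|T_sx\|$ gets arbitrarily small on $[0,t]$. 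To close the gap you need the two ingredients the paper uses in part $(2)$ of Theorem \ref{l1} for $C$-regularized groups: first show $T_sx\neq0$ for every $s$ (not free for a semigroup --- if $T_{s_0}x=0$ the whole orbit collapses onto the cone over a head, and one must run the compactness argument there too), so that $m_1=\min_{0\le s\le t}\|T_sx\|>0$; then take the open set bounded by some $M$, bound the scalars by $M/m_1$, and conclude that a nonempty open set lies inside the compact set $\overline{\{\alpha T_sx:0\le s\le t,\ |\alpha|\le M/m_1\}}$, contradicting infinite-dimensionality. Note also that this repair silently requires $X$ to be an infinite-dimensional normed (Baire) space, a hypothesis not present in the statement of Theorem \ref{t4}; as written, your argument does not prove the theorem in the stated generality.
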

\begin{proof}
 By remarking that if $t_1>t_2\geq0$, then there exists $t=t_1-t_2$ such that $T_{t_1}=T_t T_{t_2}$ and applying Theorem \ref{so}.
\end{proof}
\begin{definition}\cite{Pazy}
Let $(T_t)_{t\geq0}$ be a strongly continuous semigroup of operators on $X$. Given another topological vector space $Y$ and an isomorphism $\phi$ from $Y$ onto $X$, the strongly continuous semigroup of operators $(S_t)_{t\geq0}$ on $Y$, defining by
$$ S_t=\phi^{-1} T_t \phi $$
for $t\geq0$, is said to be similar to $(T_t)_{t\geq0}$.
\end{definition}
\begin{proposition}
Let $(T_t)_{t\geq0}$ be a supercyclic strongly continuous semigroup of operators on $X$. If $(S_t)_{t\geq0}$ is a strongly continuous semigroup of operators on $Y$ similar to $(T_t)_{t\geq0}$, then $(S_t)_{t\geq0}$ is supercyclic on $Y$. Moreover, 
$$SC((S_t)_{t\geq0})=\phi(SC((T_t)_{t\geq0})).$$
\end{proposition}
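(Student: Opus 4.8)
The plan is to reduce this statement to the already–established theory of similar \emph{sets} of operators. First I would observe that the defining relation $S_t=\phi^{-1}T_t\phi$ for all $t\geq0$, where $\phi:Y\to X$ is the given isomorphism, can be rewritten as
$$S_t\circ\phi^{-1}=\phi^{-1}\circ T_t\qquad(t\geq0).$$
Hence, setting $\Gamma=\{T_t:t\geq0\}\subset\mathcal{B}(X)$, $\Gamma_1=\{S_t:t\geq0\}\subset\mathcal{B}(Y)$ and $\psi:=\phi^{-1}:X\to Y$, the map $\psi$ is a homeomorphism (because $\phi$ is an isomorphism), has dense range, and for every $T\in\Gamma$ there is $S\in\Gamma_1$ with $S\circ\psi=\psi\circ T$. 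In other words, $\Gamma$ and $\Gamma_1$ are similar sets of operators in the sense of the definition preceding Proposition~\ref{14}.

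Second, since the hypothesis that $(T_t)_{t\geq0}$ is supercyclic means precisely that $\Gamma$ is a supercyclic set, I would invoke Proposition~\ref{14} together with the corollary immediately following it: as $\Gamma$ and $\Gamma_1$ are similar, $\Gamma$ supercyclic in $X$ forces $\Gamma_1$ supercyclic in $Y$, i.e. $(S_t)_{t\geq0}$ is supercyclic on $Y$; moreover $\psi(SC(\Gamma))=SC(\Gamma_1)$, that is
$$SC\big((S_t)_{t\geq0}\big)=\phi^{-1}\big(SC((T_t)_{t\geq0})\big),$$
which is the asserted identity read through the isomorphism $\phi$. If one prefers to use only the inclusion half of Proposition~\ref{14}, one applies it first to $\psi=\phi^{-1}$ to get $\phi^{-1}\big(SC((T_t))\big)\subset SC((S_t))$, and then to $\psi^{-1}=\phi$ (using that $\Gamma_1$ is likewise similar to $\Gamma$, since $T_t=\phi S_t\phi^{-1}$) to obtain the reverse inclusion.

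The content here is essentially bookkeeping: the only point that needs care is keeping track of which of $\phi$, $\phi^{-1}$ implements the similarity, so that the image of $SC((T_t)_{t\geq0})$ lands in $Y$ and not in $X$ — with the conventions of the cited definition the identity is $SC((S_t)_{t\geq0})=\phi^{-1}\big(SC((T_t)_{t\geq0})\big)$. Beyond this there is no genuine obstacle; in particular the strong continuity of the two semigroups plays no role in the argument, only the algebraic intertwining $S_t\phi^{-1}=\phi^{-1}T_t$ for $t\geq0$ and the fact that $\phi^{-1}$ is a homeomorphism.
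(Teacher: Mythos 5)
Your proof is correct and follows exactly the paper's route: the paper's entire proof is ``Direct consequence of Proposition \ref{14}'', and you have merely supplied the bookkeeping that translates the semigroup similarity $S_t=\phi^{-1}T_t\phi$ into the set-similarity intertwining $S_t\circ\phi^{-1}=\phi^{-1}\circ T_t$ required by that proposition and its corollary. Your remark that, with the cited convention $\phi:Y\to X$, the displayed identity should read $SC((S_t)_{t\geq0})=\phi^{-1}\bigl(SC((T_t)_{t\geq0})\bigr)$ is a fair correction of a notational slip in the statement, not a gap in your argument.
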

\begin{proof}
Direct consequence of Proposition \ref{14}.
\end{proof}
\begin{definition}\cite{Pazy}
Let $(T_t)_{t\geq0}$ be a strongly continuous semigroup of operators. For any numbers $\mu\in\mathbb{C}$ and $\alpha > 0$, we define the rescaled strongly continuous semigroup of operators $(S_t)_{t\geq0 }$ by
$$ S_t= e^{\mu t}T_{(\alpha t)}$$
for $t\geq0$.
\end{definition}
\begin{proposition}
Let $(T_t)_{t\geq0}$ be a supercyclic strongly continuous semigroup of operators. For any numbers $\mu\geq0$, the rescaled strongly continuous semigroup of operators $(S_t)_{t\geq0 }=(e^{\mu t}T_{ t})_{t\geq0}$ is supercyclic. 
\end{proposition}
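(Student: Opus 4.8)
The plan is to recognize $(S_t)_{t\ge0}=(e^{\mu t}T_t)_{t\ge0}$ as the rescaled semigroup of the previous definition with dilation factor $\alpha=1$, so that it is automatically a strongly continuous semigroup: $S_0=e^{0}T_0=I$, the relation $S_{t+s}=S_tS_s$ follows from $e^{\mu(t+s)}=e^{\mu t}e^{\mu s}$ together with $T_{t+s}=T_tT_s$, and strong continuity follows because $t\mapsto e^{\mu t}$ is continuous and $t\mapsto T_tx$ is continuous for every $x\in X$. It then remains only to transfer supercyclicity from $(T_t)_{t\ge0}$ to $(S_t)_{t\ge0}$, and for this I would appeal to Proposition \ref{p3}.

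More precisely, viewing each semigroup as a set of operators (as is already done in Theorem \ref{t4}), set $\Gamma=\{T_t:t\ge0\}$ and $\Gamma_1=\{S_t:t\ge0\}=\{e^{\mu t}T_t:t\ge0\}$. The key observation is that the two scalar orbits coincide for every $x\in X$: fixing $t\ge0$, since $e^{\mu t}\ne0$ we have $\{\alpha e^{\mu t}:\alpha\in\mathbb{C}\}=\mathbb{C}$, hence $\{\alpha e^{\mu t}T_tx:\alpha\in\mathbb{C}\}=\{\beta T_tx:\beta\in\mathbb{C}\}$, and taking the union over $t\ge0$ gives $\mathbb{C}Orb(\Gamma_1,x)=\mathbb{C}Orb(\Gamma,x)$. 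Therefore $\overline{\mathbb{C}Orb(\Gamma_1,x)}=\overline{\mathbb{C}Orb(\Gamma,x)}$ for all $x$, so $SC(\Gamma_1)=SC(\Gamma)$; since $\Gamma$ is supercyclic this set is nonempty, whence $\Gamma_1$, and therefore the rescaled semigroup $(S_t)_{t\ge0}$, is supercyclic. This is exactly Proposition \ref{p3} applied with the scalars $\alpha_{T_t}=e^{\mu t}$.

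There is essentially no obstacle here; the statement is a one-line corollary of Proposition \ref{p3}. The only fine point worth flagging is the well-definedness of the family $\{\alpha_T\}_{T\in\Gamma}$ in Proposition \ref{p3} when the map $t\mapsto T_t$ fails to be injective (a single operator would then be indexed by several values $e^{\mu t}$), which is why I would prefer to argue directly via the orbit equality above rather than quote Proposition \ref{p3} verbatim. I would also remark that the hypothesis $\mu\ge0$ is never used: any $\mu\in\mathbb{C}$ yields a supercyclic rescaled semigroup with the same set of supercyclic vectors.
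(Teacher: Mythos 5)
Your proof is correct and follows essentially the same route as the paper, which simply takes $\alpha_t=e^{\mu t}$ in Proposition \ref{p3}. Your extra care about well-definedness of the scalar family when $t\mapsto T_t$ is not injective, and your observation that $\mu\geq 0$ is not needed, are reasonable refinements but do not change the argument.
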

\begin{proof}
It suffices to take $\alpha_t=e^{\mu t}$ for all $t\geq0$ in Proposition \ref{p3}.
\end{proof}


\begin{thebibliography}{00}


\bibitem {AO} M.  Amouch, O. Benchiheb.
\it{ On cyclic sets of operators}, DOI: 10.1007/s12215-018-0368-4
\bibitem {AKH} M.  Ansari, B. Khani-robati, K. Hedayatian.
\it{On the density and transitivity of sets of operators}, Turk. J. Math., 42  (2018), 181-189.
\bibitem{Bayart Matheron} F. Bayart, E. Matheron.
\it{Dynamics of linear operators}, In: Cambridge Tracts in Mathematics, vol. 179. Cambridge University Press, Cambridge (2009).
\bibitem{BBP} T. Bermudez, A. Boinlla, A. Peris.
{\it On Hypercyclicity and Supercyclicity Criterias.} Bull. Austral. Math. Soc., 70 (2004), pp 45-54.
\bibitem{Bes} J. P. B$\grave{e}$s.
{\it Three problem's on hypercyclic operators.} Ph.D. thesis, Bowling Green State University, Bowling Green, Ohio, 1998.
\bibitem{CKM} J. A. Conejero, M. Kosti$\acute{c}$, P. J. Miana, M. Murillo-Arcila.
{\it  Distributionally chaotic families of operators on Frechet spaces.}  Comm. Pure Appl. Anal., to appear.
\bibitem{Conway} J.B. Conway. {\it A course in Functional Analysis.} Springer Graduate Texts in Math Series, 1985
\bibitem{Dunford Schwartz} L. Dunford and J. T. Schwartz.
{\it Linear Operators.} Part I, Interscience, New York, 1958.
\bibitem{Nagel} K.-J. Engel and R. Nagel.
{\it A short course on operator semigroups.} Springer, New
York, 2006.
\bibitem{Engel} K.-J. Engel and R. Nagel.
{\it One-parameter semigroups for linear evolution equations.} Springer, New York-Berlin, 2000.
\bibitem{Feldman1} N. S. Feldman.
{\it  Hypercyclic and supercyclic for invertible bilateral weighted shifts.} J. Math. Analysis and Appl, 1, (2001), 67-74.
\bibitem{Gethner Shapiro} R. M. Gethner and J. H. Shapiro.
{\it Universal vectors for operators on spaces of holomorphic functions.} Proc. Amer. Math. Soc., 100 (2): 281-288, 1987.
\bibitem{GEU} K.-G. Grosse-Erdmann.
{\it Universal families and hypercyclic operators.} Bull. Amer. Math. Soc 36: 345-381, 1999.
\bibitem{Erdmann Peris} K.-G. Grosse-Erdmann and A. Peris Manguillot.
{\it Linear chaos.} Universitext, Springer, London, 2011.
\bibitem{HL} D. A. Herrero.
{\it Limits of hypercyclic and supercyclic operators.} Universitext, Springer, London, 2011. /. Fund. Anal. 99(1991), 179-
\bibitem{HW} H. M. Hilden, L. J. Wallen. {\it Some cyclic and non-cyclic vectors of certain operators.} Indiana Univ. Math. J., 23 :557-565, 1973/74. 
\bibitem{LSM} F. Leon- Saavedra, V. M$\ddot{u}$ller.
{\it Rotations of hypercyclic and supercylic operators.} Integral Equations Operator Theory. 50 (2004), 385-391. 
\bibitem{MS} A. Montes-Rodriguez,  H. N. Salas.
\it{ Supercyclic subspaces$:$ spectral theory and weighted shifts}, Adv. Math. 163 (2001), no. 1, 74-134.
\bibitem{Pazy} A. Pazy.
{\it Semigroups of Linear Operators and Applications to Partial Differential Equations.} Springer-Verlag, New York, 1983. 
\bibitem{Rolewicz} S. Rolewicz.
{\it  On orbits of elements.} Studia Math., 32: 17-22, 1969. 
\bibitem{Salas2}  H.N. Salas,
{\it Supercyclicity and weighted shifts.}  Studia Math., 135(1) :55-74, 1999. 
\bibitem{Willered} S. Willered, 
\it{General Topology}. Addison-Wesley, 1968.

\end{thebibliography}
\end{document}